\declaretheoremstyle[spaceabove = 3pt, spacebelow = 3pt, bodyfont = \itshape]{theorem}
\declaretheoremstyle[spaceabove = 3pt, spacebelow = 3pt]{remark}
\declaretheorem[style=theorem]{theorem}
\declaretheorem[style=theorem, sibling=theorem]{corollary}
\declaretheorem[style=theorem, sibling=theorem]{lemma}
\declaretheorem[style=theorem, sibling=theorem]{proposition}
\declaretheorem[style=remark, sibling=theorem]{definition}
\declaretheorem[style=remark, sibling=theorem]{example}
\declaretheorem[style=remark, sibling=theorem]{remark}
\declaretheorem[style=theorem, numberwithin=section, title=Theorem]{alphatheorem}
\declaretheorem[style=theorem, sibling=alphatheorem, title=Conjecture]{alphaconjecture}
\declaretheorem[style=theorem, sibling=alphatheorem, title=Proposition]{alphaproposition}
\crefname{alphatheorem}{Theorem}{Theorems}
\crefname{alphaconjecture}{Conjecture}{Conjectures}
\crefname{alphacorollary}{Corollary}{Corollaries}
\crefname{alphaproposition}{Proposition}{Propositions}
\crefname{alphaquestion}{Question}{Questions}
\setlist[enumerate]{font=\normalfont}
\mathchardef\mhyphen="2D
\newcommand\dash{\nobreakdash-\hspace{0pt}}
\DeclareMathOperator\Alb{Alb}
\DeclareMathOperator\alb{alb}
\DeclareMathOperator\Aut{Aut}
\DeclareMathOperator\Bihol{Bihol}
\DeclareMathOperator\derived{\mathbf{D}}
\DeclareMathOperator\HH{H}
\DeclareMathOperator\hh{h}
\DeclareMathOperator\Perf{Perf}
\DeclareMathOperator\Pic{Pic}
\newcommand\bounded{\ensuremath{\mathrm{b}}}
\title{The Albanese morphism for hyperelliptic varieties}
\author{Pieter Belmans \and Andreas Demleitner \and Pedro N\'u\~nez}
\begin{document}
\maketitle


\begin{abstract}
  We explicitly describe the Albanese morphism
  of a hyperelliptic variety,
  i.e., the quotient~$X$ of an abelian variety~$A$ by a finite group~$G$
  acting freely and not only by translations,
  by giving a description of the Albanese variety
  and the Albanese fibers in terms of~$A$ and~$G$.
  In particular, the fibers are themselves abelian or hyperelliptic varieties,
  and we investigate which can occur in explicit examples.
  As an application
  we show that the derived category of~$X$ is indecomposable in certain cases.
\end{abstract}


\section{Introduction}
Hyperelliptic varieties are smooth projective varieties
defined as quotients of abelian varieties
by finite groups acting freely and not only via translations.
In particular, they are smooth projective varieties which admit abelian varieties
as finite \'etale covers,
and thus they are again of Kodaira dimension~0,
without being abelian varieties themselves.

Given a hyperelliptic variety $X=A/G$, we will study the associated Albanese morphism
\begin{equation}
  \label{equation:albanese}
  \alb_X\colon X\to\Alb(X)
\end{equation}
where~$\Alb(X)$ is the Albanese variety,
an abelian variety whose dimension is
given by the \emph{irregularity}~$q_X = \hh^1(X,\mathcal{O}_{X})$.
By \cite[Theorem~8.3]{MR814013}
the Albanese morphism~$\alb_X$ of a variety with torsion canonical bundle
is an \'etale fiber bundle with smooth and connected fibers,
i.e.,
there exists an \'etale covering~$B\to\Alb(X)$
such that~$X\times_{\Alb(X)}B\cong F\times B$,
where~$F$ is a smooth projective variety which is a fiber of~$\alb_X$,
and moreover the Kodaira dimension of~$F$ is~0 by \cref{remark:torsion-order-of-fiber}.

Our main result is an explicit and effective description of
both~$\Alb(X)$
and the Albanese fiber~$F$
in terms of the defining data~$A$ and~$G$ for~$X$.
\begin{alphatheorem}
  \label{theorem:fiber-albanese}
  Let~$X=A/G$ be a hyperelliptic variety.
  Then
  \begin{enumerate}
    \item $\Alb(X)$ is an explicit quotient of a subquotient of~$A$
      (see \eqref{equation:candidate-albanese} for the precise form)
    \item the fibers of the (isotrivial) Albanese morphism \eqref{equation:albanese}
      are explicit subquotients of~$A$
      (see \cref{proposition:fibers} for the precise form),
      so they are either abelian varieties or hyperelliptic varieties.
  \end{enumerate}
\end{alphatheorem}
Here, subquotient means it is the quotient of an abelian subvariety of~$A$ by some group action induced by~$G$.
In case of~$\Alb(X)$, the subquotient is necessarily an abelian variety,
whereas for the fibers it depends on the geometry.
The more precise version is given in \Cref{theorem:albanese-morphism}.

It turns out that a precursor to this result
can be found by inspecting the details of the proof of \cite[Theorem~7.13]{MR360582},
which is concerned with understanding quotients of abelian varieties by arbitrary group actions.
However, in the more special case of hyperelliptic varieties,
the resulting description of the Albanese is more explicit.
See \cref{remark:existing} for more context.

With the explicit description given by \cref{theorem:fiber-albanese} in hand,
we start the study of \emph{which} (abelian or hyperelliptic) varieties
can arise as Albanese fibers
in \cref{section:albanese-fibers}.
This becomes an important ingredient in
the geography of hyperelliptic varieties.

\paragraph{Indecomposability of the derived category}
Understanding (in)decomposability of the derived category of coherent sheaves
is an important step in understanding the derived category itself.
By Serre duality,
Calabi--Yau varieties have an indecomposable derived category.
For curves
it was shown in \cite{MR2838062}
that the only curve~$C$ for which~$\derived^\bounded(C)$ is decomposable
is~$\mathbb{P}^1$.

Already for surfaces the situation is more interesting.
By Okawa \cite[Theorem 1.7]{2304.14048}
the derived category of a smooth projective minimal surface~$S$
with~$\HH^1(S,\mathcal{O}_S)\neq 0$ is indecomposable.
For minimal surfaces of general type with~$\HH^2(S,\mathcal{O}_S)\neq 0$
(which are conjectured to have indecomposable derived categories)
a starting point is \cite[\S4.3]{1508.00682v2}.
By Orlov's blowup formula, if~$S$ is not minimal,
then~$\derived^\bounded(S)$ is decomposable.

Surfaces of Kodaira dimension~0 come in 4~types:
abelian, bielliptic, Enriques and K3.
The first and the last have trivial canonical bundle,
thus their derived categories are indecomposable.
The other two classes have non-trivial torsion canonical bundle.
On Enriques surfaces every line bundle is an exceptional object,
and there are interesting semiorthogonal decompositions,
whose study was initiated in \cite{MR1629793}.
For bielliptic surfaces however
the derived category is indecomposable by the earlier cited result of Okawa.

Generalizing the indecomposability for bielliptic surfaces
we propose the following conjecture,
and we will subsequently use the properties of the Albanese morphism
to partially settle it.
\begin{alphaconjecture}
  \label{conjecture:indecomposability}
  Let~$X$ be a hyperelliptic variety.
  Then~$\derived^\bounded(X)$ is indecomposable.
\end{alphaconjecture}

As evidence for this conjecture,
we will prove in \cref{section:indecomposability}
the following.
\begin{alphaproposition}
  \label{proposition:three-results}
  Let~$X=A/G$ be a hyperelliptic variety.
  Then~$\derived^\bounded(X)$ is indecomposable in the following cases:
  \begin{enumerate}
    \item $X$ is cyclic, i.e., $G$ is cyclic,
      in which case $X$ is even stably indecomposable;
    \item $X$ has irregularity~$q_X=\dim X-2$ or~$\dim X-1$,
      in which case $X$ is even stably indecomposable;
    \item the Albanese fiber has trivial canonical bundle.
  \end{enumerate}
  In particular, all hyperelliptic 3-folds have an indecomposable derived category.
\end{alphaproposition}
The notion of stable indecomposability for a variety
is a strengthening of that of indecomposability for its derived category.
It is introduced in \cite{MR4582884},
and we will recall it in \cref{definition:stably-indecomposable}.

These three points are proven in
\cref{corollary:cyclic,corollary:big-irregularity,corollary:cy-fibers}
respectively.
The first two build upon a theorem of Pirozhkov
(see \cref{theorem:pirozhkov}),
whilst the latter builds upon an indecomposability criterion of Kawatani--Okawa
(see \cref{theorem:kawatani-okawa}).

The conditions in \cref{proposition:three-results} are not mutually disjoint:
by \cref{proposition:large-irregularity-is-cyclic},
any variety with irregularity~$q_X=\dim X-1$ is cyclic,
and all three conditions apply to bielliptic surfaces.
In higher dimensions however they diverge,
as explained in \cref{remark:conditions-diverge}.

One class of examples which is \emph{not} covered by any of these methods
are the regular hyperelliptic 4-folds where~$G=\mathrm{D}_4\times\mathbb{Z}/2\mathbb{Z}$
appearing in \cite[\S10.1.2]{2211.07998}:
they have~$q_X=0$ and~$\omega_X\neq\mathcal{O}_X$, as~$\mathrm{h}^{0,4}(X)=0$.
The methods involving the paracanonical base locus \cite[Corollary~1.6]{2107.09564v3}
also do not apply to this example,
because the paracanonical base locus equals the canonical base locus
(as~$\Pic^0X$ is trivial)
and the base locus of~$\omega_X$ is~$X$,
as it is non-trivial but torsion.

\paragraph{Conventions}
Throughout we work over~$\mathbb{C}$.

\paragraph{Acknowledgements}
The second named author extends thanks to Julia Kotonski for valuable exchanges.
The third named author would like to thank Jungkai Alfred Chen for useful discussions.
The third named author was supported by the NSTC of Taiwan, with grant numbers 112-2811-M-002-108 and 113-2811-M-002-087.

\section{Preliminaries}
\label{section:preliminaries}
In this section we recall some basics
and set up the notation and terminology.

\paragraph{Hyperelliptic varieties}
The classification of K-trivial surfaces
has~4 types, one of which are the bielliptic surfaces.
These have been classified by Bagnera--de Franchis in 1908 and Enriques--Severi in 1909,
and they are usually presented as
the surfaces of the form~$(E_1\times E_2)/G$
where~$E_1,E_2$ are elliptic curves,
and~$G$ is a finite group which acts on~$E_1$ not only by translations,
and on~$E_2$ by translations.
The study of their higher-dimensional cousins was initiated by Lange in \cite{MR1862215}.
We will define them and recall some notation and elementary properties.

\begin{definition}
  \label{definition:hyperelliptic}
  A \emph{hyperelliptic manifold} is the quotient $X =  T/G$ of
  a complex torus $T = \mathbb{C}^{n}/\Lambda$ by the action of
  a finite subgroup $G \subseteq \operatorname{Bihol}(T)$
  that acts freely on $T$ and is not a subgroup of the group of translations.
  A projective hyperelliptic manifold is called a \emph{hyperelliptic variety}.
\end{definition}

We will write~$r=r_X\colon T\twoheadrightarrow X=T/G$ for the \'etale quotient map,
or~$r=r_X\colon A\twoheadrightarrow X=A/G$ when~$X$ is a variety
(which happens precisely when the torus is an abelian variety).
Since $X$ is a finite \'etale quotient of a complex torus, its canonical divisor is torsion,
so $X$ has Kodaira dimension~0.

By \cite[Theorem~1.3]{MR4363769} the study of smooth quotients~$T/G$
where~$G$ is any finite group of biholomorphisms
is reduced to fibrations in products of projective spaces
over tori and hyperelliptic manifolds.
Thus the objects in \cref{definition:hyperelliptic} are the key
to understand such quotients.

\begin{remark}
  \label{remark:no-translations}
  Without loss of generality,
  we may assume that the subgroup $G$ in \Cref{definition:hyperelliptic}
  contains no translations \cite[page~306]{MR3404712}.
  Indeed, the subgroup of translations in $G$ is a normal subgroup $H \subseteq G$,
  so we can write
  \begin{equation}
    T/G \cong (T/H)/(G/H),
  \end{equation}
  where $T/H$ is again a complex torus
  and $G/H$ is a non-trivial subgroup of automorphisms
  that acts freely and contains no translations.
  From now on, we will frequently make this assumption without further notice.
\end{remark}
The following object associated to a hyperelliptic manifold will be important.
\begin{definition}
  Let $X = T/G$ be a hyperelliptic manifold of dimension~$n=\dim X$,
  where $T = \mathbb{C}^{n}/\Lambda$.
  Then we denote by
  \begin{equation}
    \label{equation:complex-representation}
    \tilde{\rho}_X \colon G \to \operatorname{GL}(n,\mathbb{C})
  \end{equation}
  the restriction of the complex representation~%
  $\operatorname{Bihol}(T) \to \operatorname{GL}(n,\mathbb{C})$,
  and call it the \emph{complex representation of $X$}.
  Moreover, we denote by
  \begin{equation}
    \label{equation:representation}
    \rho_{X} \colon G \to \Bihol_{0}(T)
  \end{equation}
  the representation that sends an automorphism $g \in G$ to the Lie group automorphism $\rho_{X}(g)$
  whose lift to the universal cover is $\tilde{\rho}_{X}(g)$, i.e., for all $g \in G$ we have
  \begin{equation}
    \label{equation:explicit-representation}
    \rho_{X}(g) = g - g(0).
  \end{equation}
\end{definition}

\begin{remark}
  \label{remark:complex-representation}
  If we assume as in \Cref{remark:no-translations} that $G$ does not contain any translations,
  then the complex representation~$\tilde{\rho}_X$ is \emph{faithful} \cite[Remark~3.2.2(1)]{2211.07998},
  i.e.,~it is injective.
  In this case, it is also the holonomy representation of the compact flat Riemannian manifold underlying~$X$,
  see \cite[Remark~3.2.2(2)]{2211.07998}.
  In particular, if we assume that $G$ does not contain any translations,
  then~$G$ is the holonomy group of~$X$,
  hence it is an intrinsic invariant of $X$.

  The complex torus $T$ can also be recovered from $X$,
  because its defining lattice is the unique maximal normal abelian subgroup of $\pi_{1}(X)$,
  fitting into a short exact sequence
  \begin{equation}
    \label{equation:fundamental-groups}
    1 \to \Lambda = \pi_{1}(T) \to \pi_{1}(X) \to G \to 1,
  \end{equation}
  see \cite[\S 5.2]{MR3404712}.
  If $X$ is projective, then GAGA implies that $T$ admits
  exactly one algebraic structure compatible with its complex structure,
  so in this case the abelian variety $A$ can also be recovered from~$X$.
\end{remark}

We are interested in the Albanese variety, whose dimension is given by the irregularity.
The assumption that $G$ does not act only via translations in \Cref{definition:hyperelliptic}
implies the following bound on the irregularity.
\begin{lemma}
  \label{lemma:irregularity}
  Let $X = T/G$ be a hyperelliptic manifold and let $q_{X}$ be its irregularity.
  Then
  \begin{equation}
    \label{equation:irregularity-bound}
    q_{X} < \dim(X).
  \end{equation}
\end{lemma}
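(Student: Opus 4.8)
The plan is to show that the irregularity $q_X = \hh^1(X,\mathcal{O}_X)$ is strictly smaller than $n = \dim X$ by identifying it with the dimension of a space of $G$-invariant holomorphic $1$-forms, and then arguing that $G$ being non-trivial and not acting solely by translations forces at least one linear direction to be non-invariant.

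\begin{proof}
Since $r\colon T\twoheadrightarrow X$ is a finite \'etale Galois cover with group $G$, pullback of holomorphic $1$\dash forms identifies
\begin{equation}
  \HH^0(X,\Omega_X^1)\cong\HH^0(T,\Omega_T^1)^G,
\end{equation}
and by Hodge symmetry $q_X=\hh^0(X,\Omega_X^1)=\dim_{\mathbb{C}}\HH^0(T,\Omega_T^1)^G$. Now $\HH^0(T,\Omega_T^1)\cong(\mathbb{C}^n)^\vee$ is the space of translation\dash invariant $1$\dash forms on $T=\mathbb{C}^n/\Lambda$, and the action of $G$ on this space is the dual of the complex representation~$\tilde{\rho}_X$ from~\eqref{equation:complex-representation}: indeed, by~\eqref{equation:explicit-representation} the differential of each $g\in G$ at every point is the linear part $\rho_X(g)=\tilde{\rho}_X(g)$, so $g$ acts on constant\dash coefficient forms via $(\tilde{\rho}_X)^\vee$. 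Therefore
\begin{equation}
  q_X=\dim_{\mathbb{C}}\bigl((\mathbb{C}^n)^\vee\bigr)^G=\dim_{\mathbb{C}}\bigl((\mathbb{C}^n)^G\bigr),
\end{equation}
the last equality because the dimension of the invariant subspace of a representation and of its dual agree.

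It remains to prove that $\dim_{\mathbb{C}}(\mathbb{C}^n)^G<n$, equivalently that $\tilde{\rho}_X$ is not the trivial representation. By \Cref{remark:no-translations} we may assume $G$ contains no translations, and then by \Cref{remark:complex-representation} the representation $\tilde{\rho}_X$ is faithful. Since $G$ is non\dash trivial (it is not contained in the translations by the definition of a hyperelliptic manifold), faithfulness forces $\tilde{\rho}_X(g)\neq\mathrm{id}$ for some $g\in G$, so $\tilde{\rho}_X$ is nontrivial and hence its invariant subspace is a proper subspace of $\mathbb{C}^n$. This gives $q_X<n$, as claimed.
\end{proof}

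The main subtlety I anticipate is making the identification of the $G$\dash action on $\HH^0(T,\Omega_T^1)$ with the dual complex representation fully precise: one must check that a biholomorphism $g$ of $T$ with linear part $\tilde{\rho}_X(g)$ pulls a constant\dash coefficient form back to a constant\dash coefficient form transformed by $(\tilde{\rho}_X(g))^\vee$, which is exactly the content of~\eqref{equation:explicit-representation} stating that the derivative of $g$ is everywhere the fixed linear map $\rho_X(g)$. Once this is in place, everything reduces to the elementary representation\dash theoretic fact that a nontrivial (here faithful, since $G\neq 1$) complex representation has invariant subspace of dimension strictly less than the ambient dimension, so the translation\dash freeness assumption built into \Cref{definition:hyperelliptic} is precisely what rules out equality.
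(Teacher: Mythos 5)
Your proof is correct and takes essentially the same route as the paper: identify $q_X$ with $\dim\HH^{0}(T,\Omega_{T}^{1})^{G}$, the dimension of the invariant subspace of (the dual of) the complex representation, and note that this is a proper subspace because $G$ does not act only by translations. The paper's proof is a one-line condensation of exactly this argument, omitting the detour through faithfulness.
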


\begin{proof}
  Since $T$ is a complex torus of the same dimension as $X$,
  and since $G$ acts on $T$ not only via translations, we have
  \begin{equation}
    \label{equation:irregularity}
    q_X
    =\hh^{0}(X,\Omega_{X}^{1})
    =\dim \HH^{0}(T,\Omega_{T}^{1})^{G}
    <\hh^{0}(T,\Omega_{T}^{1})=\dim T=\dim X,
  \end{equation}
  which proves the claim.
\end{proof}

\begin{remark}
  \label{remark:irregularity}
  It follows from the above proof that the irregularity $q_X$ of a hyperelliptic manifold $X = T/G$, where $T = V/\Lambda$,  equals
  \begin{equation}
    q_X = \dim(V^\vee)^G = \dim V^G,
  \end{equation}
  which in turn equals the multiplicity of the trivial representation in the complex representation $\tilde{\rho}_X$.
\end{remark}

An important class of hyperelliptic manifolds is the following.
\begin{definition}
  \label{definition:bdf}
  A \emph{cyclic} hyperelliptic manifold is a hyperelliptic manifold $X = T/G$ such that $G$ is a cyclic group.
\end{definition}
Cyclic hyperelliptic manifolds are called \emph{Bagnera--de Franchis manifolds} in \cite[\S 4]{MR3404712}.
The following result shows that hyperelliptic manifolds
with the largest possible irregularity are examples of cyclic hyperelliptic manifolds.
One recipe to classify cyclic hyperelliptic varieties,
viable in low dimensions,
is given in \cite[\S5]{MR1862215}.
A more general recipe is given in \cite{MR4809689}.

\begin{proposition}
  \label{proposition:large-irregularity-is-cyclic}
  Let $X=T/G$ be a hyperelliptic manifold of irregularity~$q_X=\dim X-1$.
  Then~$X$ is cyclic.
\end{proposition}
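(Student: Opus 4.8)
The plan is to read off the conclusion directly from the complex representation $\tilde{\rho}_X$, using the numerical hypothesis $q_X = \dim X - 1$. First I would invoke \cref{remark:no-translations} to assume without loss of generality that $G$ contains no translations, so that by \cref{remark:complex-representation} the complex representation $\tilde{\rho}_X \colon G \to \operatorname{GL}(n,\mathbb{C})$ is faithful, where $n = \dim X$. This faithfulness is the structural input that will do all the work.

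Next I would translate the hypothesis into representation-theoretic language. By \cref{remark:irregularity}, the irregularity $q_X$ equals the multiplicity of the trivial representation inside $\tilde{\rho}_X$, so the assumption $q_X = n-1$ says precisely that the trivial representation occurs with multiplicity $n-1$ in the $n$-dimensional representation $\tilde{\rho}_X$. A dimension count leaves a one-dimensional complement, giving a decomposition $\tilde{\rho}_X \cong \mathbf{1}^{\oplus(n-1)} \oplus \chi$ for a single character $\chi \colon G \to \mathbb{C}^\times$ (which is automatically nontrivial, since otherwise the multiplicity would be $n$).

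The final step is to note that $\tilde{\rho}_X(g)$ acts as the identity on the trivial summand for every $g \in G$, whence $\ker \tilde{\rho}_X = \ker \chi$; faithfulness of $\tilde{\rho}_X$ therefore forces $\chi$ to be injective. Thus $G$ embeds into $\mathbb{C}^\times$ via $\chi$, and being finite its image is a finite subgroup of $\mathbb{C}^\times$, hence cyclic. This shows $G$ is cyclic, i.e.~$X$ is cyclic. I do not expect a serious obstacle here: the whole argument is an unwinding of the three cited remarks. The only point that deserves a moment of care is the reduction to the translation-free presentation, where one should check that passing from $(T,G)$ to $(T/H, G/H)$ leaves both $\dim X$ and $q_X$ unchanged—which holds because these are invariants of $X$ itself—so that the multiplicity hypothesis survives and the resulting statement "$G$ is cyclic" concerns the intrinsic holonomy group.
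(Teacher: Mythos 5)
Your argument is correct and is essentially identical to the paper's proof: both translate the hypothesis $q_X=\dim X-1$ via $\HH^0(X,\Omega_X^1)=(V^\vee)^G$ into the statement that $\tilde{\rho}_X$ splits as $(n-1)$ copies of the trivial representation plus a single nontrivial character $\chi$, and then use faithfulness of $\tilde{\rho}_X$ (under the standing translation-free assumption) to conclude $\chi$ is injective and $G\cong\chi(G)\subseteq\mathbb{C}^\times$ is cyclic. Your extra remark on why the reduction to the translation-free presentation is harmless is a fine (implicit in the paper) point of care.
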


\begin{proof}
  Writing~$T=V/\Lambda$,
  we have that~$\HH^0(X,\Omega_X^1)=\HH^0(T,\Omega_T^1)^G=(V^\vee)^G$.
  Thus under our assumption on the irregularity,
  the complex representation $\tilde{\rho}_{X}$ contains exactly one non-trivial $1$-dimensional representation~$\chi \colon G \to \mathbb{C}^{\times}$.
  Since $\tilde{\rho}_{X}$ is faithful, $\chi$ has to be faithful, i.e., $\chi$ is injective.
  This means that $G \cong \chi(G)$ is a finite subgroup of $\mathbb{C}^{\times}$, hence a cyclic group.
\end{proof}

\begin{remark}
  \label{remark:low-irregularity-cyclic}
  The converse of \Cref{proposition:large-irregularity-is-cyclic} is not true.
  Indeed, let $n \geq 2$, let $T$ be a product of $n$ elliptic curves,
  let $E$ be any elliptic curve, and consider the $\mathbb{Z}/2\mathbb{Z}$-action
  on $T \times E$ given by
  \begin{equation}
    (a,z) \mapsto \left(-a, z + t \right), \qquad t \in E[2] \setminus \{0\}.
  \end{equation}
  Then $X \colonequals (T \times E)/(\mathbb{Z}/2\mathbb{Z})$ is a cyclic hyperelliptic variety
  of dimension $\dim{X} = n + 1$ and of irregularity~$q_{X} = 1$,
  so that~$q_{X} = \dim{X} - n < \dim{X} - 1$.
\end{remark}

\begin{remark}
  \label{remark:cyclic-irregularity-bound}
  It is however true that if $X$ is a cyclic hyperelliptic variety,
  then its irregularity is at least $1$, see \cite[Lemma 3.3]{MR1862215}.
\end{remark}

Usually, bielliptic surfaces (to which \cref{proposition:large-irregularity-is-cyclic} applies)
are \emph{not} written using cyclic groups.
Of the~7 deformation families in \cite[\S VI.20]{MR1406314},
3~use non-cyclic groups.
However our standing assumption is that~$G$ contains \emph{no} translations,
which one can assume as in \cref{remark:no-translations}.
One is referred to \cite[Example~3.2.4]{2211.07998} for the description
of bielliptic surfaces using this assumption,
where cyclicity is thus visible.

This description of bielliptic surfaces can also be used to find an example
showing that the converse of \Cref{proposition:large-irregularity-is-cyclic} is not true.

\begin{example}
  \label{example:small-irregularity-cyclic}
  Let $S_{1}$ be a bielliptic surface with holonomy group~$\mathbb{Z}/2\mathbb{Z}$,
  and let~$S_{2}$ be a bielliptic surface with holonomy group~$\mathbb{Z}/3\mathbb{Z}$,
  which exist by \cite[Example~3.2.4]{2211.07998}.
  By \Cref{remark:complex-representation}
  we can write~$S_{1} = A_{1}/(\mathbb{Z}/2\mathbb{Z})$ and~$S_{2} = A_{2}/(\mathbb{Z}/3\mathbb{Z})$,
  with both groups acting freely and without translations on the abelian surfaces~$A_1$ and~$A_2$.
  Then, the cyclic group $\mathbb{Z}/6\mathbb{Z}$ acts freely and without translations
  on the abelian variety $A_{1} \times A_{2}$,
  so $X\colonequals (A_{1} \times A_{2})/(\mathbb{Z}/6\mathbb{Z})$ is a cyclic hyperelliptic variety
  of dimension $4$ and irregularity $2$.
\end{example}

Complex tori have irregularity equal to their dimension, so \Cref{lemma:irregularity} implies
that hyperelliptic manifolds cannot be isomorphic to complex tori.
The following result, which is mentioned without a proof as \cite[Remark 3.2.2(3)]{2211.07998},
shows that they cannot even be homeomorphic.
\begin{lemma}
  \label{lemma:notabelian}
  A hyperelliptic manifold has non-abelian fundamental group, and thus cannot be homeomorphic to a Lie group.
\end{lemma}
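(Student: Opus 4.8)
The plan is to prove the stronger statement that $\pi_1(X)$ is non-abelian; the assertion about Lie groups then follows immediately from the classical Eckmann--Hilton fact that the fundamental group of a path-connected topological group is abelian, since a Lie group homeomorphic to the connected manifold $X$ is itself connected, hence path-connected. The key input for non-abelianness is the short exact sequence \eqref{equation:fundamental-groups}, whose kernel $\Lambda$ is abelian, together with the observation that the conjugation action of $\pi_1(X)$ on $\Lambda$ is encoded by the complex representation $\tilde{\rho}_X$.

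Concretely, I would realise $\pi_1(X)$ as the group of deck transformations of the universal cover $V\to X$, where $T=V/\Lambda$. Every such deck transformation is an affine map $v\mapsto \tilde{\rho}_X(g)v+t$ lifting some $g\in G$, and the normal subgroup $\Lambda$ consists exactly of the translations $v\mapsto v+\lambda$ with $\lambda\in\Lambda$. A direct computation then shows that conjugating the translation by $\lambda$ by any element lying over $g$ yields the translation by $\tilde{\rho}_X(g)\lambda$. Hence the conjugation action of $\pi_1(X)$ on $\Lambda$ factors through $G$ and is given by the restriction $\tilde{\rho}_X|_\Lambda$.

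Suppose now, for contradiction, that $\pi_1(X)$ is abelian. Then every conjugation action is trivial, so $\tilde{\rho}_X(g)$ fixes $\Lambda$ pointwise for each $g\in G$. Since $\Lambda$ spans $V$ over $\mathbb{R}$ and $\tilde{\rho}_X(g)$ is $\mathbb{C}$\dash linear, this forces $\tilde{\rho}_X(g)=\mathrm{id}$ for all $g$; equivalently, by \eqref{equation:explicit-representation}, every $g\in G$ is a translation. This contradicts the defining assumption in \Cref{definition:hyperelliptic} that $G$ is not a subgroup of the translations (alternatively, it contradicts the faithfulness and non-triviality of $\tilde{\rho}_X$ recorded in \Cref{remark:complex-representation}). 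The one step requiring genuine care is the identification of the abstract conjugation action attached to \eqref{equation:fundamental-groups} with the geometric linear action $\tilde{\rho}_X|_\Lambda$; once this is in place, the remainder of the argument is formal.
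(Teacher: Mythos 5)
Your proof is correct and follows essentially the same route as the paper: both arguments realise $\pi_1(X)$ via deck transformations of $V\to X$ and exploit that the lift of some $g\in G$ with $\tilde{\rho}_X(g)\neq\mathrm{id}$ fails to commute with the translation by a lattice vector $\lambda$ not fixed by $\tilde{\rho}_X(g)$. The only difference is presentational: the paper directly exhibits the non-commuting pair, whereas you phrase the same computation contrapositively as the non-triviality of the conjugation action of $\pi_1(X)$ on $\Lambda$ in the sequence \eqref{equation:fundamental-groups}.
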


\begin{proof}
  Let~$X = T/G$ be a hyperelliptic manifold and let~$\tilde{\rho}_X$ be its complex representation.
  By assumption we can find some $g \in G$
  such that $\tilde{g}(z) = \tilde{\rho}_{X}(g)(z) + b$ with
  $\tilde{\rho}_{X}(g) \neq \operatorname{id}_{\mathbb{C}^{n}}$,
  where $\tilde{g} \colon \mathbb{C}^{n} \to \mathbb{C}^{n}$
  denotes the lift of the automorphism $g$ to the universal cover of $X$.

  Let $\Lambda \subseteq \mathbb{C}^{n}$ be a lattice such that $T = \mathbb{C}^{n}/\Lambda$,
  and let $\lambda \in \Lambda$ be a vector such that $\tilde{\rho}_{X}(g)(\lambda) \neq \lambda$,
  which must exist because $\Lambda$ spans $\mathbb{C}^{n}$,
  and $\tilde{\rho}_{X}(g) \neq \operatorname{id}_{\mathbb{C}^{n}}$ by assumption.
  Then $\tilde{h}(z)\colonequals z + \lambda$ is a lift of the identity on $T$.

  Both $\tilde{g}$ and $\tilde{h}$ are elements of the group of deck transformations
  of the universal cover of $X$, corresponding to elements of the fundamental group of $X$,
  so it suffices to show that they do not commute.
  But for~$z \in \mathbb{C}^{n}$ we have
  \begin{equation}
    \tilde{g}(\tilde{h}(z))
    =
    \tilde{\rho}_{X}(g)(z) + \tilde{\rho}_{X}(g)(\lambda) + b \neq \tilde{\rho}_{X}(g)(z) + \lambda + b
    =
    \tilde{h}(\tilde{g}(z)),
  \end{equation}
  because $\lambda$ was chosen so that $\tilde{\rho}_{X}(g)(\lambda) \neq \lambda$.
  Therefore, the fundamental group of $X$ is non-abelian.

  On the other hand, the fundamental group of a Lie group is always abelian.
  So a hyperelliptic manifold cannot be homeomorphic to a Lie group.
\end{proof}

\begin{remark}
  \label{remark:notabelian-birational}
  In fact, one can show that a hyperelliptic variety cannot be birationally equivalent to an abelian variety.
  Indeed, a birational equivalence between a hyperelliptic variety and an abelian variety
  can be lifted to a generically \'etale morphism between abelian varieties,
  and such a morphism is necessarily everywhere \'etale.
  The original birational equivalence must have finite fibers,
  hence be an isomorphism by Zariski's Main Theorem, contradicting \Cref{lemma:notabelian}.
  Alternatively, one can again invoke \Cref{lemma:irregularity} and use the fact that the irregularity is a birational invariant.
\end{remark}

%
%

\paragraph{The Albanese morphism}
Recall from \cite[Theorem V.13]{MR1406314} that,
given a smooth projective variety $X$,
there exists an abelian variety~$\Alb(X)$ (the \emph{Albanese variety})
together with a morphism
\begin{equation}
  \operatorname{alb}_{X} \colon X \to \Alb(X)
\end{equation}
(the \emph{Albanese morphism})
characterized by the following universal property:~%
for any complex torus $T$ and any morphism $f \colon X \to T$
there exists a unique morphism $\tilde{f} \colon \Alb(X) \to T$
such that $\tilde{f} \circ \operatorname{alb}_{X} = f$.
More explicitly, integration along paths gives a morphism
\begin{equation}
  \pi_{1}(X) \to \HH^{0}(X,\Omega_{X}^{1})^{\vee}.
\end{equation}
We can construct the Albanese variety as
\begin{equation}
  \label{equation:albanese-variety}
  \Alb(X) = \HH^{0}(X,\Omega_{X}^{1})^{\vee}/\operatorname{im}(\pi_{1}(X)),
\end{equation}
and the morphism $\operatorname{Alb}_{X} \colon X \to \Alb(X)$ is induced by integration of differential forms along paths.
The dimension of the Albanese variety is given by the irregularity~$q_X=\hh^0(X,\Omega_X^1)=\hh^1(X,\mathcal{O}_X)$.

As already mentioned in the introduction,
the Albanese morphism for a hyperelliptic variety has many pleasant properties,
which we encode in the following lemma.
\begin{lemma}
  \label{lemma:fiber-bundle-structure}
  Let~$X$ be a hyperelliptic variety.
  Then~$\alb_X$ is an \'etale fiber bundle with smooth and connected fibers of Kodaira dimension~0,
  i.e.,
  there exists an \'etale covering~$B\to\Alb(X)$
  such that~$X\times_{\Alb(X)}B\cong F\times B$,
  where~$F$ is a smooth projective variety of Kodaira dimension~0.
\end{lemma}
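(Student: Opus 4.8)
The plan is to reduce the statement to the cited structure theorem \cite[Theorem~8.3]{MR814013}, whose only hypothesis is that the canonical bundle be torsion; the remaining content is then a short adjunction argument identifying the Kodaira dimension of the fiber.

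First I would verify that $\omega_X$ is torsion. The étale quotient map $r\colon A\twoheadrightarrow X$ satisfies $r^*\omega_X\cong\omega_A\cong\mathcal{O}_A$, since $A$ is abelian and $r$ is étale. As $r$ is finite of degree $|G|$, the norm map gives $\omega_X^{\otimes|G|}\cong\mathrm{Nm}(r^*\omega_X)\cong\mathcal{O}_X$, so $\omega_X$ is torsion of order dividing $|G|$ (in particular $\kappa(X)=0$, as already observed after \cref{definition:hyperelliptic}). With this hypothesis in place, \cite[Theorem~8.3]{MR814013} applies verbatim and produces the étale covering $B\to\Alb(X)$ together with the trivialization $X\times_{\Alb(X)}B\cong F\times B$, where $F$ is a smooth connected projective fiber of $\alb_X$.

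It then remains to show $\kappa(F)=0$, which is the content of \cref{remark:torsion-order-of-fiber}. Since $F$ is a fiber of the smooth morphism $\alb_X$, its normal bundle $N_{F/X}$ is trivial, being the pullback of the tangent space to $\Alb(X)$ at the image point; hence adjunction yields $\omega_F\cong\omega_X|_F$. The restriction of the torsion bundle $\omega_X$ to $F$ is again torsion, so $\kappa(F)=0$.

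The only genuine obstacle is the structure theorem itself, which I invoke as a black box; everything else is routine. The point worth stressing is that the product decomposition holds only after the étale base change $B\to\Alb(X)$, so $\alb_X$ is merely isotrivial rather than a global product --- and this isotriviality is exactly what the fiber bundle formulation records.
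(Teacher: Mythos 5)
Your proposal is correct and follows essentially the same route as the paper: torsionness of $\omega_X$ (which the paper records right after \cref{definition:hyperelliptic}), Kawamata's Theorem~8.3 for the \'etale fiber bundle structure, and adjunction via the trivial normal bundle for $\kappa(F)=0$. The only cosmetic difference is that the paper does not read smoothness and connectedness of $F$ directly off the structure theorem but justifies them separately --- connectedness by \cite[Theorem~1]{MR622451} and smoothness by generic smoothness --- which is bookkeeping rather than substance.
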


\begin{proof}
  By \cite[Theorem~8.3]{MR814013},
  the Albanese morphism~$\alb_X$ of a variety with a torsion canonical bundle
  is an \'etale fiber bundle,
  i.e., there exists an \'etale covering $B \to \Alb(X)$
  such that~$X\times_{\Alb(X)}B \cong F\times B$,
  where~$F$ is a fiber of~$\alb_X$.
  By \cite[Theorem~1]{MR622451} the fiber $F$ is connected.
  By generic smoothness,~$F$ is moreover smooth.
  To see that~$\kappa(F) = 0$,
  note that~$F$ has trivial normal bundle in $X$
  because it is the fiber over a point,
  so by the adjunction formula we have~$\mathrm{K}_{X}|_{F} = \mathrm{K}_{F}$
  (see also \cref{remark:torsion-order-of-fiber}).
  Since~$\mathrm{K}_{X}$ is torsion, so is $\mathrm{K}_{F}$, and we conclude that $\kappa(F) = 0$.

  Alternatively,
  because a hyperelliptic variety is the quotient of an abelian variety,
  one can use \cite[Theorem~7.13 and Remark~7.16]{MR360582} to conclude.
\end{proof}

\section{On the Albanese morphism}
\label{section:albanese-morphism}
Let~$X=A/G$ be a hyperelliptic variety,
where~$A=V/\Lambda$ is an abelian variety,
and~$G\subseteq\Aut(A)$ a finite subgroup of automorphisms
containing no translations.
Recall that we write~$r\colon A\twoheadrightarrow X$
for the \'etale quotient map.
Consider the representation $\rho_X \colon G \to \Aut_{0}(A)$,
which maps $g \in G$ to its linear part,
i.e.,
to the algebraic group automorphism $\rho_{X}(g) = g - g(0)$.

\paragraph{Decomposing the abelian variety $A$}
First we will construct two abelian subvarieties~$A_0,A_1\subset A$,
suitably compatible with the action of~$G$,
which will be used to describe the Albanese variety,
resp.~the Albanese fibers.
This decomposition will be a partial \emph{isotypical decomposition}
of $A$ with respect to the $\rho_{X}(G)$-action, cf.~\cite[\S 13.6]{MR2062673}.

Define
\begin{equation}
  \label{equation:subvariety}
  A_0 \colonequals \left(\bigcap_{g \in G} \mathrm{ker}(\rho_{X}(g) - \mathrm{id}_A)\right)^0,
\end{equation}
where the superscript zero indicates that we take the connected component of the origin.

Note that $A_{0} \subseteq A$ is an abelian subvariety,
so we may write it as $A_{0} = V_{0}/\Lambda_{0}$
for some linear subspace~$V_{0} \subseteq V$,
and~$\Lambda_{0}\colonequals \Lambda \cap V_{0}$.
By definition, the induced linear action of $G$ on $A_0 = V_0/\Lambda_0$ is trivial.

Let $A_1 \subseteq A$ be an abelian subvariety which is stable under the $\rho_{X}(G)$-action and such that the addition map
\begin{equation}
  \label{equation:mu}
  \mu \colon A_0 \times A_1 \to A:(a_0,a_1) \mapsto a_0 + a_1
\end{equation}
is an isogeny.
Note that this isogeny is then automatically $\rho_{X}(G)$-equivariant.

One can construct such a subvariety as follows.
Choose an ample line bundle $L$ on $A$.
By averaging, we may choose it to be $G$-invariant.
Its first Chern class is represented by an alternating form
\begin{equation}
  E \colon V \times V \to \mathbb{R}
\end{equation}
such that $E(\Lambda,\Lambda) \subseteq \mathbb{Z}$ and $E(iv,iw) = E(v,w)$ for all $v, w \in V$
\cite[Proposition 2.1.6]{MR2062673},
and this alternating form is the imaginary part of a positive-definite hermitian form
(cf.~\S4 of op.~cit)
\begin{equation}
  H \colon V \times V \to \mathbb C
\end{equation}
whose imaginary part maps $\Lambda \times \Lambda$ to $\mathbb Z$ \cite[Lemma 2.1.7]{MR2062673}.
Note that by construction $H$ is $G$-invariant as well
(which is the same as being~$\tilde{\rho}_{X}(G)$-invariant,
as $\mathrm{c}_1(L)$ is translation-invariant).
Define $V_1$ as the orthogonal complement of $V_0$ in $V$ with respect to $H$ and let~$\Lambda_1\colonequals \Lambda \cap V_1$.

\begin{lemma}
  \label{lemma:linear-action}
  The subvariety
  \begin{equation}
    \label{equation:second-subvariety}
    A_1 \colonequals V_1/\Lambda_1
  \end{equation}
  is an abelian subvariety of~$A$ which is stable under the $\rho_{X}(G)$-action,
  such that the addition map \eqref{equation:mu} is an isogeny.
\end{lemma}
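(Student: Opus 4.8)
The plan is to reduce the whole statement to the single fact that the complex subspace $V_1 = V_0^{\perp_H}$ is \emph{$\Lambda$-rational}, that is, that $\Lambda_1 = \Lambda \cap V_1$ is a full lattice in $V_1$. This is exactly the point where the polarization $H$ (rather than an arbitrary positive-definite hermitian form) is needed, and it is the concrete incarnation of Poincaré's complete reducibility theorem in the present setting. Once rationality is established, all three assertions follow quickly, so I would attack the rationality first and expect it to be the main obstacle.

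To prove that $\Lambda_1$ is a full lattice, I would first replace the $H$-orthogonal complement by the $E$-orthogonal complement, where $E = \operatorname{Im} H$. For the complex subspace $V_0$ these coincide: if $H(v,\cdot)$ vanishes on $V_0$ then so does its imaginary part $E(v,\cdot)$; conversely, if $E(v,\cdot)$ vanishes on $V_0$, then using that $V_0$ is complex (so $iw \in V_0$) together with $E(iu,iw) = E(u,w)$ one computes $E(iv,w) = -E(v,iw) = 0$, whence $H(v,w) = E(iv,w) + iE(v,w) = 0$ for all $w \in V_0$. Hence $V_1 = V_0^{\perp_E}$. Now $E$ is a nondegenerate alternating form taking rational values on $\Lambda$, and $V_0$ is $\Lambda$-rational because $\Lambda_0 = \Lambda \cap V_0$ is a full lattice in $V_0$ (as $A_0$ is an abelian subvariety). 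Since the $E$-annihilator of a $\Lambda$-rational subspace is again $\Lambda$-rational, by nondegeneracy of the induced rational pairing, the subspace $V_1$ is $\Lambda$-rational and $\Lambda_1$ spans $V_1$.

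With fullness of $\Lambda_1$ in hand, $A_1 = V_1/\Lambda_1$ is a complex subtorus of $A$, and the restriction of $H$ to $V_1$ is again positive-definite with imaginary part integral on $\Lambda_1$, so it is a polarization; thus $A_1$ is projective and is an abelian subvariety. For the $\rho_X(G)$-stability I would argue that, since $H$ is $\tilde{\rho}_X(G)$-invariant and positive-definite, each linear part $\tilde{\rho}_X(g)$ is $H$-unitary; because $G$ acts trivially on $A_0$, the subspace $V_0$ is fixed pointwise by every $\tilde{\rho}_X(g)$, so its orthogonal complement $V_1$ is $\tilde{\rho}_X(g)$-stable. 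As $\tilde{\rho}_X(g)$ is the linear part of an automorphism of $A$ it preserves $\Lambda$, hence it preserves $\Lambda_1 = \Lambda \cap V_1$, giving the stability of $A_1$.

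Finally, positive-definiteness of $H$ yields the orthogonal direct-sum decomposition $V = V_0 \oplus V_1$, so the differential of $\mu$ identifies $V_0 \times V_1$ with $V$; in particular $\dim A_0 + \dim A_1 = \dim A$. Therefore $\mu$ is a surjective homomorphism of abelian varieties of equal dimension with finite kernel (isomorphic to $A_0 \cap A_1$), hence an isogeny, and its $\rho_X(G)$-equivariance is then automatic from the compatible linear actions on the two factors.
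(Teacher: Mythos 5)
Your proof is correct and takes essentially the same route as the paper: the $\rho_X(G)$-stability is obtained from exactly the same computation (each $\tilde{\rho}_X(g)$ preserves $H$ and fixes $V_0$ pointwise, hence preserves $V_1=V_0^{\perp_H}$), and the isogeny assertion rests on Poincar\'e's complete reducibility. The only difference is that the paper invokes that theorem as a black box, whereas you unpack its key step --- the $\Lambda$-rationality of $V_1$ via the integrality of $E=\operatorname{Im}(H)$ on $\Lambda$ --- which is precisely the standard proof of the cited result.
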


\begin{proof}
  The fact that the addition map \eqref{equation:mu} is an isogeny follows from
  Poincar\'e's complete reducibility theorem \cite[Theorem~5.3.5]{MR2062673}.
  To see that $A_1$ is stable under the $\rho_{X}(G)$-action, let $\tilde{\rho}_{X}(g) \in \operatorname{GL}(V)$
  be the lift of the complex representation to the universal cover.
  Then, for any $v_0 \in V_0$ and $v_1 \in V_1$, since $\tilde{\rho}_{X}(G)$ acts trivially on $V_{0}$ and $H$ is $\tilde{\rho}_{X}(G)$-invariant, we have
  \begin{equation}
    H(v_{0},\tilde{\rho}_{X}(g)(v_{1})) = H(\tilde{\rho}_{X}(g)(v_0), \tilde{\rho}_{X}(g)(v_1)) = H(v_0, v_1) = 0.
  \end{equation}
  Hence $\tilde{\rho}_{X}(g)(v_{1}) \in V_{1}$ and $A_1$ is indeed stable under the $\rho_{X}(G)$-action.
\end{proof}

For $g \in G$, there exists a unique $\tau(g) \in A$ such that
\begin{equation}
  \label{equation:tau}
  g(-) = \rho_{X}(g)(-) + \tau(g).
\end{equation}
Surjectivity of the addition map~$\mu$
in \eqref{equation:mu}
implies that we may write
\begin{equation}
  \label{equation:taudecomposition}
  \tau(g) = t_g^{(0)} + t_g^{(1)},
\end{equation}
where $t_g^{(i)} \in A_i$ for each $i \in \{0,1\}$.
This decomposition is however not unique.

Note that for $g, h \in G$ we have
\begin{equation}
  \label{equation:tau-cocycle}
  \tau(gh) = \rho_{X}(g)(\tau(h)) + \tau(g).
\end{equation}
The function $\tau \colon G \to A$ defines a~1-cocycle,
and the role of the cohomology class~$[\tau] \in\HH^{1}(G,A)$
in the theory of hyperelliptic varieties is described in \cite[Proposition 2.3]{MR1862215}.

\paragraph{Describing the $G$-action using the decomposition}
Now we express the~$G$-action on~$A$
in terms of~$A_0$ and~$A_1$.

\begin{lemma}
  \label{lemma:description-of-action}
  Let $K \colonequals \ker(\mu)$.
  Then the isogeny $\mu$ induces an isomorphism
  \begin{equation}
    \label{equation:barmu}
    \bar{\mu} \colon (A_{0} \times A_{1})/K \cong A,
  \end{equation}
  and the $G$-action on $A$ induces a unique $G$-action on $(A_{0} \times A_{1})/K$
  making the isomorphism $\bar{\mu}$ equivariant with respect to the $G$-action.
  This $G$-action can be written as
  \begin{equation}
    g \cdot ((a_{0},a_{1}) + K) = (a_{0} + t_{g}^{(0)}, \rho_{X}(g)(a_{1}) + t_{g}^{(1)}) + K
  \end{equation}
  for all $g \in G$.
\end{lemma}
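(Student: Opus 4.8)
The plan is to establish the three assertions in order, each of which essentially unwinds the constructions set up before the statement. First I would deduce the isomorphism~$\bar{\mu}$ from the first isomorphism theorem for abelian varieties: since~$\mu$ is an isogeny by \cref{lemma:linear-action}, it is surjective with kernel~$K=\ker(\mu)$, hence factors through the quotient~$(A_0\times A_1)/K$ and the induced map~$\bar{\mu}$ is an isomorphism of abelian varieties. For the second assertion I would simply transport the~$G$-action along~$\bar{\mu}^{-1}$, declaring~$g\cdot x\colonequals\bar{\mu}^{-1}(g\cdot\bar{\mu}(x))$; this is equivariant by construction, and the requirement of equivariance forces exactly this formula, so the action is the unique one making~$\bar{\mu}$ equivariant.

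The actual content is the explicit formula, which I would obtain by a direct computation. Starting from~$\bar{\mu}((a_0,a_1)+K)=a_0+a_1$ and applying~$g$ via \eqref{equation:tau}, together with the fact that~$\rho_X(g)$ is an algebraic group automorphism, gives
\[
  g\cdot(a_0+a_1)=\rho_X(g)(a_0)+\rho_X(g)(a_1)+\tau(g).
\]
Now~$\rho_X(g)$ fixes~$A_0$ pointwise by the defining property \eqref{equation:subvariety} of~$A_0$, so~$\rho_X(g)(a_0)=a_0$; inserting the decomposition \eqref{equation:taudecomposition} of~$\tau(g)$ and regrouping the summands according to whether they lie in~$A_0$ or in~$A_1$ (the latter being~$\rho_X(g)$-stable by \cref{lemma:linear-action}) yields
\[
  g\cdot(a_0+a_1)=(a_0+t_g^{(0)})+(\rho_X(g)(a_1)+t_g^{(1)}).
\]
This is precisely~$\bar{\mu}$ applied to the claimed element of~$(A_0\times A_1)/K$, so applying~$\bar{\mu}^{-1}$ recovers the asserted formula.

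The one point requiring genuine care --- and the only place where something could go wrong --- is that the decomposition \eqref{equation:taudecomposition} is \emph{not} unique, so I would have to confirm that the displayed formula is well-defined on cosets. To this end I would note that any two decompositions~$\tau(g)=t_g^{(0)}+t_g^{(1)}=s_g^{(0)}+s_g^{(1)}$ satisfy~$\mu(t_g^{(0)}-s_g^{(0)},t_g^{(1)}-s_g^{(1)})=0$, so the two resulting elements of~$A_0\times A_1$ differ by an element of~$K$ and hence define the same coset. This is consistent with the fact that the action was defined intrinsically through~$\bar{\mu}^{-1}$, so well-definedness is automatic in any case; the explicit formula merely records the output of that intrinsic recipe. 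Finally, because the action is transported across an isomorphism from a bona fide~$G$-action on~$A$, the group-action axioms hold for free and need not be reverified, although one could check them directly from the cocycle identity \eqref{equation:tau-cocycle}.
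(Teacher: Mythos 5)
Your proposal is correct and matches the paper's approach: the paper's proof is simply ``Direct computation'' (with a remark that one could alternatively verify the group-action axioms from the cocycle identity \eqref{equation:tau-cocycle}), and you have carried out exactly that computation, including the one point of genuine care --- well-definedness of the formula despite the non-uniqueness of the decomposition \eqref{equation:taudecomposition} --- which the paper leaves implicit.
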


\begin{proof}
  Direct computation.
  One can also check directly that the given formula
  defines a $G$-action on the left-hand side
  without using the $G$-action on the right-hand side,
  using that $\tau(hg) = \rho_{X}(h)(\tau(g)) + \tau(h)$ for all $g,h \in G$.
\end{proof}

\begin{remark}
  Under the isomorphism $A_{0} \times A_{1} \cong V/(\Lambda_{0} \oplus \Lambda_{1})$, we have
  \begin{equation}
    K \cong \Lambda /(\Lambda_{0} \oplus \Lambda_{1}).
  \end{equation}
\end{remark}

\begin{remark}
  \label{remark:subgroup}
  Let $K_i \subset A_i$ be the image of $K$
  under the projection $p_{i} \colon A_0 \times A_1 \to A_i$ for $i \in \{0, 1\}$.
  Then~$K_0 \cong K \cong K_1$, because the restriction $p_{i}|_{K} \colon K \to A_{i}$ is also injective.
\end{remark}

\paragraph{Constructing the candidate Albanese variety}
The next step is to construct what will turn out to be the Albanese variety,
by considering a quotient of~$A_0$.

\begin{lemma}
  \label{lemma:action}
  With the notation above, the group $G$ acts on the abelian variety $A_{0}/K_{0}$ via
  \begin{equation}
    g \cdot (a_{0} + K_{0}) = a_{0} + t_{g}^{(0)} + K_{0},
  \end{equation}
  and the resulting action is independent of the choice of~$t_{g}^{(0)}$.
\end{lemma}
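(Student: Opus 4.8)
The plan is to establish two things: that the displayed formula genuinely defines a $G$\dash action on $A_0/K_0$, and that this action is unchanged if we replace the decomposition $\tau(g) = t_g^{(0)} + t_g^{(1)}$ from \eqref{equation:taudecomposition} by another one. Since $g$ acts by translation by the coset $t_g^{(0)} + K_0$, well\dash definedness on $A_0/K_0$ and bijectivity are automatic, and the whole content funnels through one elementary principle, which I would isolate first: because $\mu$ has kernel $K$, a pair $(\alpha,\beta)\in A_0\times A_1$ with $\alpha+\beta=0$ in $A$ lies in $K$, and hence by \cref{remark:subgroup} (where $p_0$ restricts to an isomorphism $K\cong K_0$) its first component $\alpha$ lies in $K_0$. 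In other words, ``an element of $A_0$ plus an element of $A_1$ vanishes in $A$'' converts into ``the $A_0$\dash component lies in $K_0$''.

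For the action axiom I would compute both sides. On the one hand $g\cdot\big(h\cdot(a_0+K_0)\big) = a_0 + t_h^{(0)} + t_g^{(0)} + K_0$, and on the other $(gh)\cdot(a_0+K_0) = a_0 + t_{gh}^{(0)} + K_0$, so the claim reduces to $t_{gh}^{(0)} - t_g^{(0)} - t_h^{(0)} \in K_0$. The key input is the cocycle relation \eqref{equation:tau-cocycle}, namely $\tau(gh)=\rho_X(g)(\tau(h))+\tau(g)$. Substituting the decompositions and using that $\rho_X(g)$ fixes $A_0$ pointwise by the very definition \eqref{equation:subvariety} of $A_0$, while preserving $A_1$ by \cref{lemma:linear-action}, I rewrite $\tau(gh) = \big(t_h^{(0)} + t_g^{(0)}\big) + \big(\rho_X(g)(t_h^{(1)}) + t_g^{(1)}\big)$, with the first summand in $A_0$ and the second in $A_1$. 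Comparing with $\tau(gh) = t_{gh}^{(0)} + t_{gh}^{(1)}$ shows that the pair $\big(t_{gh}^{(0)} - t_g^{(0)} - t_h^{(0)},\ t_{gh}^{(1)} - t_g^{(1)} - \rho_X(g)(t_h^{(1)})\big)$ sums to $0$ in $A$, so by the principle above its first component lies in $K_0$, which is exactly what was needed.

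Independence of the choice is the same argument: if $\tau(g) = t_g^{(0)} + t_g^{(1)} = \tilde t_g^{(0)} + \tilde t_g^{(1)}$, then $\big(t_g^{(0)} - \tilde t_g^{(0)},\ t_g^{(1)} - \tilde t_g^{(1)}\big)$ sums to $0$ in $A$, hence lies in $K$, and projecting to $A_0$ gives $t_g^{(0)} - \tilde t_g^{(0)} \in K_0$, so the translating coset $t_g^{(0)} + K_0$ is well defined. I expect no genuine obstacle here; the only point requiring care is that $\mu$ is merely an isogeny, so \eqref{equation:taudecomposition} is really non\dash unique and one must apply \cref{remark:subgroup} consistently rather than pretend $A\cong A_0\times A_1$. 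As an alternative I could deduce the statement by descent: \cref{lemma:description-of-action} already furnishes a $G$\dash action on $(A_0\times A_1)/K$ whose first coordinate does not involve $a_1$, so it pushes forward along the surjection $(A_0\times A_1)/K \twoheadrightarrow A_0/K_0$ induced by $p_0$, yielding precisely the formula above; but the direct computation is shorter and makes the role of the cocycle transparent.
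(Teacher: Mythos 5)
Your proof is correct and follows essentially the same route as the paper: reduce the action axiom to $t_{gh}^{(0)} - t_g^{(0)} - t_h^{(0)} \in K_0$ via the cocycle relation \eqref{equation:tau-cocycle} together with the facts that $\rho_X(g)$ fixes $A_0$ pointwise and preserves $A_1$, then handle independence of the decomposition by the same kernel-of-$\mu$ observation. The only cosmetic difference is that the paper also spells out the identity axiom explicitly ($t_e^{(0)}\in K_0$ because $\tau(e)=0$), which in your write-up is implicit in the independence claim.
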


\begin{proof}
  For $e \in G$, we have $\tau(e) = 0$ in $A$, so $t_{e}^{(0)} + t_{e}^{(1)} = 0$ in $A$.
  Therefore, $(t_{e}^{(0)},t_{e}^{(1)}) \in K$ and $t_{e}^{(0)} \in K_{0}$,
  and this implies that
  \begin{equation}
    e \cdot (a_{0} + K_{0}) = a_{0} + K_{0}
  \end{equation}
  for all $a_{0} \in A_{0}$.

  Let now $g, h \in G$ and let $a_{0} \in A_{0}$.
  We want to show that $(gh) \cdot (a_{0} + K_{0}) = g \cdot (h \cdot (a_{0} + K_{0}))$,
  so we need to show that
  \begin{equation}
    t_{gh}^{(0)} - t_{g}^{(0)} - t_{h}^{(0)} \in K_{0}.
  \end{equation}
  Using \eqref{equation:taudecomposition} and \eqref{equation:tau-cocycle}
  we get
  \begin{equation}
    t_{gh}^{(0)} + t_{gh}^{(1)} - \rho_{X}(g)(t_{h}^{(0)}) - \rho_{X}(g)(t_{h}^{(1)}) - t_{g}^{(0)} - t_{g}^{(1)} = 0.
  \end{equation}
  But $\rho_{X}(g)(t_{h}^{(0)}) = t_{h}^{(0)}$,
  because $t_{h}^{(0)} \in A_{0}$ by definition,
  and $\rho_{X}(g)(t_{h}^{(1)}) \in A_{1}$ by \Cref{lemma:description-of-action}.
  Therefore,
  \begin{equation}
    (t_{gh}^{(0)} - t_{h}^{(0)} - t_{g}^{(0)}, t_{gh}^{(1)} - \rho_{X}(g)(t_{h}^{(1)}) - t_{g}^{(1)}) \in K
  \end{equation}
  and $t_{gh}^{(0)} - t_{h}^{(0)} - t_{g}^{(0)} \in K_{0}$.

  For the independence of the choice of~$t_{g}^{(0)}$'s,
  note that if $t_{g}^{(0)} + t_{g}^{(1)} = s_{g}^{(0)} + s_{g}^{(1)}$
  with $t_{g}^{(i)}, s_{g}^{(i)} \in A_{i}$,
  then $(t_{g}^{(0)} - s_{g}^{(0)}, t_{g}^{(1)} - s_{g}^{(1)}) \in K$
  and $t_{g}^{(0)} - s_{g}^{(0)} \in K_{0}$.
\end{proof}

We can thus consider the quotient
\begin{equation}
  \label{equation:candidate-albanese}
  B\colonequals (A_0/K_0)/G.
\end{equation}

\begin{lemma}
  \label{lemma:candidate-albanese-variety-is-abelian}
  The quotient~$B$ in \eqref{equation:candidate-albanese} is an abelian variety.
\end{lemma}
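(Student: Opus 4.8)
The plan is to exhibit $B$ as the quotient of an abelian variety by a finite group acting \emph{by translations}, and then to invoke the standard fact that such a quotient is again an abelian variety. The entire content lies in the single observation that, by the very definition of $A_0$, the $G$-action on $A_0/K_0$ has no linear part.

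First I would note that $A_0/K_0$ is already an abelian variety: it is the quotient of the abelian variety $A_0 = V_0/\Lambda_0$ by the finite subgroup $K_0 \cong K$ (see \cref{remark:subgroup}), so the quotient map $A_0 \to A_0/K_0$ is an isogeny and $A_0/K_0$ is abelian. The crucial point is then that the $G$-action on it is by translations. Indeed, by the defining property of $A_0$ in \eqref{equation:subvariety} the linear part $\rho_X(g)$ restricts to the identity on $A_0$ for every $g \in G$, so the formula from \cref{lemma:action},
\[
  g \cdot (a_0 + K_0) = a_0 + t_g^{(0)} + K_0,
\]
describes translation by the point $t_g^{(0)} + K_0 \in A_0/K_0$, with no linear twist.

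Since \cref{lemma:action} already guarantees that these formulas define a $G$-action, the assignment $g \mapsto t_g^{(0)} + K_0$ is a group homomorphism from $G$ into the translations of $A_0/K_0$, whose image $T$ is a finite subgroup of $A_0/K_0$. The $G$-action therefore factors through $T$, and hence
\[
  B = (A_0/K_0)/G = (A_0/K_0)/T
\]
is the quotient of an abelian variety by a finite subgroup acting by translations. Such a quotient is again an abelian variety, the quotient map being an isogeny (cf.\ \cite{MR2062673}); concretely $A_0/K_0 \cong V_0/\Lambda_0'$ for a suitable lattice $\Lambda_0'$, and $B \cong V_0/\Lambda_0''$, where $\Lambda_0''$ is the enlargement of $\Lambda_0'$ by lifts of the finite group $T$.

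I expect no genuine obstacle here: once the action is seen to be translation-only, everything reduces to standard facts about isogenies and quotients of abelian varieties. The only point requiring care is the first observation, but it is forced by the definition of $A_0$ as (the identity component of) the common fixed locus of the linear parts $\rho_X(g)$, which is precisely why $A_0$ was introduced.
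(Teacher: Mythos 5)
Your proof is correct and follows the same route as the paper: observe that $A_0/K_0$ is an abelian variety and that $G$ acts on it by translations (via \cref{lemma:action}), so the quotient is again an abelian variety. The paper states this in one line; your additional details (the image $T$ of $g \mapsto t_g^{(0)} + K_0$ being a finite subgroup, the lattice description) are a harmless elaboration of the same argument.
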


\begin{proof}
  This follows because~$A_{0}/K_{0}$ is an abelian variety
  and $G$ acts on $A_{0}/K_{0}$ via translations,
  by \cref{lemma:action}.
\end{proof}

\paragraph{Constructing the candidate Albanese morphism}
Having constructed a candidate for the Albanese variety in \eqref{equation:candidate-albanese}
we will now construct a candidate for the Albanese morphism.

\begin{lemma}
  \label{lemma:albanese-morphism}
  There is a surjective morphism
  \begin{equation}
    \label{equation:albanese-morphism-to-be}
    f \colon X \to B
  \end{equation}
  fitting into the commutative diagram
  \begin{equation}
    \begin{tikzcd}
      A_{0} \times A_{1} \arrow[two heads, swap]{d}{s} \arrow[two heads]{r}{p_{0}}               & A_{0} \arrow[two heads]{d}{s_{0}} \\
      (A_{0} \times A_{1})/K \arrow{d}{\cong}[swap]{\bar{\mu}} \arrow[two heads]{r}{\bar{p}_{0}} & A_{0}/K_{0} \arrow[equal]{d} \\
      A \arrow[two heads, swap]{d}{r} \arrow[two heads]{r}{\bar{p}_{0} \circ \bar{\mu}^{-1}}     & A_{0}/K_{0} \arrow[two heads]{d}{r_{0}} \\
      A/G \arrow[two heads]{r}{f}                                                                & (A_{0}/K_{0})/G.
    \end{tikzcd}
  \end{equation}
\end{lemma}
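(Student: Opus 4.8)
The plan is to construct $f$ by descending a single $G$-equivariant morphism $A \to A_0/K_0$ along the quotient maps $r$ and $r_0$. First I would verify that the top square commutes and that $\bar p_0$ is well-defined: since $K_0 = p_0(K)$ by the definition in \cref{remark:subgroup}, the projection $p_0 \colon A_0 \times A_1 \to A_0$ maps $K$ into $K_0$, so it descends to a morphism $\bar p_0 \colon (A_0 \times A_1)/K \to A_0/K_0$ with $\bar p_0 \circ s = s_0 \circ p_0$. Composing with the isomorphism $\bar\mu^{-1}$ of \cref{lemma:description-of-action} yields the horizontal map $\bar p_0 \circ \bar\mu^{-1} \colon A \to A_0/K_0$ in the third row, and the middle square then commutes tautologically by the equality on its right edge.

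The key step is to check that $\bar p_0$ is $G$-equivariant for the two actions already at our disposal. On the source, \cref{lemma:description-of-action} gives
\[
  g \cdot ((a_0, a_1) + K) = (a_0 + t_g^{(0)}, \rho_X(g)(a_1) + t_g^{(1)}) + K,
\]
while on the target \cref{lemma:action} gives $g \cdot (a_0 + K_0) = a_0 + t_g^{(0)} + K_0$. Applying $\bar p_0$ to the former and retaining only the first factor produces exactly $a_0 + t_g^{(0)} + K_0$, which is the latter action applied to $\bar p_0((a_0,a_1)+K) = a_0 + K_0$; hence $\bar p_0$ intertwines the two actions. Since $\bar\mu$ is $G$-equivariant by \cref{lemma:description-of-action}, the composite $\bar p_0 \circ \bar\mu^{-1} \colon A \to A_0/K_0$ is $G$-equivariant as well. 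This is really the heart of the matter, though the two preceding lemmas have been arranged precisely so that both actions translate the $A_0$-coordinate by the same element $t_g^{(0)}$, which makes the verification immediate.

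With equivariance in hand I would descend to quotients. The composite $r_0 \circ \bar p_0 \circ \bar\mu^{-1} \colon A \to (A_0/K_0)/G$ is constant on $G$-orbits, so by the universal property of the quotient map $r \colon A \to A/G = X$ it factors uniquely through a morphism $f \colon X \to B$ satisfying $f \circ r = r_0 \circ \bar p_0 \circ \bar\mu^{-1}$; this is exactly the commutativity of the bottom square. I would emphasize that no freeness of the $G$-action on $A_0/K_0$ is needed here, only equivariance, and that the target $(A_0/K_0)/G = B$ exists as a variety by \cref{lemma:candidate-albanese-variety-is-abelian}. Finally, surjectivity of $f$ is formal: $p_0$, $s_0$ and $\bar\mu^{-1}$ are all surjective, so $\bar p_0 \circ \bar\mu^{-1}$ is surjective, whence $f \circ r = r_0 \circ \bar p_0 \circ \bar\mu^{-1}$ is surjective, and since $r$ is surjective $f$ must be surjective too. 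The only point requiring genuine care is the equivariance computation, which is why I would isolate it as the main step; everything else is diagram-chasing and the universal property of the quotient.
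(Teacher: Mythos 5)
Your proposal is correct and follows essentially the same route as the paper: the paper also descends the projection $(A_0\times A_1)/K\to A_0/K_0$ along the quotient maps, observing that the composite to $B$ is constant on $G$-orbits (which is exactly the equivariance you verify explicitly). Your write-up just makes the equivariance computation and the surjectivity argument more explicit than the paper's terse "direct from the construction."
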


\begin{proof}
  The projection $(A_0 \times A_1)/K \to A_0/K_0$ induces a surjective morphism
  \begin{equation}
    A \cong (A_0 \times A_1)/K \to B.
  \end{equation}
  This morphism takes constant values on $G$-orbits,
  and thus factors through the quotient by~$G$, which is~$X$.
  This establishes the existence of \eqref{equation:albanese-morphism-to-be}.
  Its role in the commutative diagram is immediate from the construction.
\end{proof}

\paragraph{Determining the candidate Albanese fibers}
Next, we will describe the fibers of \eqref{equation:albanese-morphism-to-be}.
\begin{lemma}
  \label{lemma:explicit-fibers}
  For all $a_{0} \in A_{0}$, we have
  \begin{equation}
    \label{equation:explicit-fibers}
    f^{-1}([a_{0}+K_{0}]_{G}) = r(\{a_{0}\} + A_{1}) \subseteq A/G.
  \end{equation}
\end{lemma}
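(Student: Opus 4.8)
The plan is to make the morphism $f$ completely explicit on points and then verify the two inclusions separately. Unwinding the construction in \cref{lemma:albanese-morphism}, via the isomorphism $\bar\mu$ a point of $A$ is represented by $(a_0,a_1)+K$, and $f$ sends its $G$-orbit to the $G$-orbit $[a_0+K_0]_G$ of its image under $\bar p_0$. Thus for all $a_0\in A_0$ and $a_1\in A_1$ one has $f(r(a_0+a_1))=[a_0+K_0]_G$. The inclusion $\supseteq$ is then immediate: every element of $r(\{a_0\}+A_1)$ has the form $r(a_0+a_1)$ and hence lies in the fiber over $[a_0+K_0]_G$.

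For the reverse inclusion I would take an arbitrary $r(b)\in f^{-1}([a_0+K_0]_G)$ and write $b=b_0+b_1$ with $b_i\in A_i$. The hypothesis reads $[b_0+K_0]_G=[a_0+K_0]_G$, so by the explicit translation action on $A_0/K_0$ from \cref{lemma:action} there is some $g\in G$ with $b_0+t_g^{(0)}\equiv a_0\pmod{K_0}$. Since $r(b)=r(g\cdot b)$, I may replace $b$ by $g\cdot b$; by \cref{lemma:description-of-action} the $A_0$-component of $g\cdot b$ is exactly $b_0+t_g^{(0)}$, which is congruent to $a_0$ modulo $K_0$.

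The only slightly delicate step is to upgrade this congruence modulo $K_0$ into an honest equality in $A$ of the form $g\cdot b=a_0+a_1'$ with $a_1'\in A_1$. Writing $g\cdot b=c_0+c_1$ with $c_i\in A_i$ and $c_0\equiv a_0\pmod{K_0}$, I choose $k=(k_0,k_1)\in K$ with $c_0=a_0+k_0$. The defining relation $\mu(k)=0$, i.e.\ $k_0+k_1=0$ in $A$, lets me trade the shift in the $A_0$-factor for one in the $A_1$-factor: $g\cdot b=a_0+k_0+c_1=a_0+(c_1-k_1)$ with $c_1-k_1\in A_1$. Hence $r(b)=r(g\cdot b)\in r(\{a_0\}+A_1)$, giving $\subseteq$. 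The main obstacle is precisely this bookkeeping with $K$: one must keep in mind that $K_0$ and $K_1$ are the two projections of the single subgroup $K\subseteq A_0\times A_1$ (as in \cref{remark:subgroup}), so that a congruence in the $A_0$-summand can be converted into a translation by the matching $A_1$-component.
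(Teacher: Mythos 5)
Your proof is correct and takes essentially the same route as the paper's: both unwind the diagram of \cref{lemma:albanese-morphism} to get the pointwise formula $f(r(a_{0}+a_{1}))=[a_{0}+K_{0}]_{G}$, use the translation action of \cref{lemma:action} together with \cref{lemma:description-of-action} to produce the relevant $g\in G$, and trade a discrepancy in $K_{0}$ in the $A_{0}$-summand for a translation in $A_{1}$ via an element of $K$. The only difference is organizational: the paper computes $f^{-1}([a_{0}+K_{0}]_{G})$ in one pass as the image of an iterated preimage through the diagram (using $g^{-1}$ to absorb $t_{g}^{(0)}$ back into $\{a_{0}\}+A_{1}$), whereas you verify the two inclusions separately.
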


\begin{proof}
  Let $a_{0} \in A_{0}$ and consider $[a_{0} + K_{0}]_{G} \in (A_{0}/K_{0})/G = B$.
  Since all the vertical arrows on the left-hand side of
  the commutative diagram in \Cref{lemma:albanese-morphism} are surjective, we have
  \begin{equation}
    f^{-1}([a_{0}+K_{0}]_{G})
    =
    r(\bar{\mu}(s(p_{0}^{-1}s_{0}^{-1}r_{0}^{-1}([a_{0} + K_{0}]_{G})))).
  \end{equation}
  By definition of the group action from \Cref{lemma:action}, we have
  \begin{equation}
    r_{0}^{-1}([a_{0}+K_{0}]_{G})
    =
    \bigcup_{g \in G} \{ a_{0} + t_{g}^{(0)} + K_{0} \} \subseteq A_{0}/K_{0}.
  \end{equation}
  Therefore, we have
  \begin{equation}
    s_{0}^{-1}r_{0}^{-1}([a_{0}+K_{0}]_{G})
    =
    \bigcup_{g \in G} \bigcup_{k_{0} \in K_{0}} \{ a_{0} + t_{g}^{(0)} + k_{0} \} \subseteq A_{0}
  \end{equation}
  and
  \begin{equation}
    p_{0}^{-1}s_{0}^{-1}r_{0}^{-1}([a_{0}+K_{0}]_{G})
    =
    \bigcup_{g \in G} \bigcup_{k_{0} \in K_{0}} \{ a_{0} + t_{g}^{(0)} + k_{0} \} \times A_{1} \subseteq A_{0} \times A_{1}
  \end{equation}
  We claim that
  \begin{equation}
    s(p_{0}^{-1}s_{0}^{-1}r_{0}^{-1}([a_{0}+K_{0}]_{G}))
    =
    \bigcup_{g \in G} \{ a_{0} + t_{g}^{(0)} \} \times A_{1} + K.
  \end{equation}
  Indeed, let $g \in G$, let $k_{0} \in K_{0}$ and let $a_{1} \in A_{1}$.
  Then, there exists some $k_{1} \in K_{1} \subseteq A_{1}$ such that $(k_{0},k_{1}) \in K$, so we can write
  \begin{equation}
    (a_{0} + t_{g}^{(0)} + k_{0} , a_{1}) + K
    =
    (a_{0} + t_{g}^{(0)}, a_{1} - k_{1}) + K \in \{ a_{0} + t_{g}^{(0)} \} \times A_{1} + K.
  \end{equation}
  Next, we claim that
  \begin{equation}
    r(\bar{\mu}(s(p_{0}^{-1}s_{0}^{-1}r_{0}^{-1}([a_{0}+K_{0}]_{G}))))
    =
    [\{ a_{0} \} + A_{1}]_{G} = \{ [a_{0} + a_{1}]_{G} \mid a_{1} \in A_{1} \} \subseteq A/G. \end{equation}
  Indeed, let $g \in G$ and let $a_{1} \in A_{1}$.
  Since $\tau(e) = 0$, \eqref{equation:tau-cocycle} implies that
  \begin{equation}
    \tau(g^{-1})
    =
    -\rho_{X}(g^{-1})(\tau(g))
    =
    -\rho_{X}(g^{-1})(t_{g}^{(1)}) - t_{g}^{(0)},
  \end{equation}
  so we can write
  \begin{equation}
    [a_{0} + t_{g}^{(0)} + a_{1}]_{G}
    =
    [g^{-1}(a_{0} + t_{g}^{(0)} + a_{1})]_{G}
    =
    [a_{0} + \rho_{X}(g^{-1})(a_{1}) - \rho_{X}(g^{-1})(t_{g}^{(1)})]_{G},
  \end{equation}
  and since $\rho_{X}(g^{-1})(a_{1}) - \rho_{X}(g^{-1})(t_{g}^{(1)}) \in A_{1}$ by \Cref{lemma:linear-action},
  we see that this element is in $[\{a_{0}\} + A_{1}]_{G}$.
  In conclusion, we have shown that
  \begin{equation}
    f^{-1}([a_{0}+K_{0}]_{G}) = \{ [a_{0} + a_{1}]_{G} \mid a_{1} \in A_{1} \} \subseteq A/G,
  \end{equation}
  and the right-hand side is precisely $r(\{a_{0}\} + A_{1}) \subseteq A/G$.
\end{proof}

Next, we describe a subgroup of~$G$
which will act on (translates of)~$A_1$
to give a succinct description of the fibers in \cref{proposition:fibers}.
\begin{lemma}
  \label{lemma:subgroup}
  With the notation above, define
  \begin{equation}
    \label{equation:subgroup}
    H \colonequals \{ g \in G \mid t_{g}^{(0)} \in K_{0} \}.
  \end{equation}
  Then $H \subseteq G$ is a well-defined subgroup.
\end{lemma}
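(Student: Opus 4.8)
The plan is to first settle the well-definedness of the condition defining $H$, since the decomposition \eqref{equation:taudecomposition} of $\tau(g)$ is not unique, and then to verify the three subgroup axioms by reusing the computations already carried out in the proof of \cref{lemma:action}. For well-definedness, I would invoke the last paragraph of that proof: if $t_g^{(0)} + t_g^{(1)} = s_g^{(0)} + s_g^{(1)}$ with both summands lying in the respective $A_i$, then $t_g^{(0)} - s_g^{(0)} \in K_0$. Because $K_0$ is a subgroup, the membership $t_g^{(0)} \in K_0$ holds if and only if $s_g^{(0)} \in K_0$, so the condition defining $H$ is independent of the chosen decomposition of $\tau(g)$. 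This observation is what makes $H$ a \emph{bona fide} subset of $G$ in the first place.

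For the subgroup axioms, I would proceed as follows. Membership of the identity is immediate: as noted in \cref{lemma:action}, the relation $\tau(e) = 0$ forces $t_e^{(0)} \in K_0$, so $e \in H$. For closure under multiplication, suppose $g, h \in H$. The central computation in the proof of \cref{lemma:action} shows that $t_{gh}^{(0)} - t_g^{(0)} - t_h^{(0)} \in K_0$; writing $t_{gh}^{(0)}$ as the sum of this element with $t_g^{(0)}$ and $t_h^{(0)}$, each of the three summands lies in $K_0$ by hypothesis, whence $t_{gh}^{(0)} \in K_0$ and $gh \in H$.

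It remains to treat inverses, which one can do either by invoking finiteness of $G$ or by a short direct argument. For the latter, apply the cocycle relation \eqref{equation:tau-cocycle} to $g g^{-1} = e$ to obtain $\tau(g^{-1}) = -\rho_X(g^{-1})(\tau(g))$. Since $\rho_X(g^{-1})$ fixes $A_0$ pointwise and preserves $A_1$ by \cref{lemma:linear-action}, the decomposition of $\tau(g^{-1})$ can be taken with $A_0$-component equal to $-t_g^{(0)}$, which lies in $K_0$ exactly when $t_g^{(0)}$ does; hence $g \in H$ implies $g^{-1} \in H$. I do not expect any genuine obstacle here, as every ingredient is already present in the proof of \cref{lemma:action}; the only point requiring care is the non-uniqueness of the decomposition \eqref{equation:taudecomposition}, which the well-definedness step disposes of once and for all.
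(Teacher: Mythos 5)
Your proposal is correct and follows essentially the same route as the paper: well-definedness of the defining condition via the non-uniqueness computation already carried out at the end of the proof of \cref{lemma:action}, and then identity and closure under multiplication via the cocycle relation \eqref{equation:tau-cocycle}. The only difference is that you additionally verify closure under inverses explicitly (the paper omits this step, implicitly relying on the finiteness of $G$, under which a nonempty multiplicatively closed subset is automatically a subgroup); your direct argument for that step, using $\tau(g^{-1}) = -\rho_X(g^{-1})(\tau(g))$ and the triviality of the $\rho_X$-action on $A_0$, is also correct.
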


\begin{proof}
  We show first that $H$ is well-defined as a subset of~$G$,
  i.e., that it is independent of the choice of~$t_{g}^{(0)}$ for~$g\in G$.
  For every $g \in G$, $\tau(g)\in A$ was uniquely determined by the property in \eqref{equation:tau}.
  Suppose now that we have
  \begin{equation}
    \tau(g) = t_{g}^{(0)} + t_{g}^{(1)} = s_{g}^{(0)} + s_{g}^{(1)},
  \end{equation}
  where $t_{g}^{(0)}, s_{g}^{(0)} \in A_{0}$ and $t_{g}^{(1)}, s_{g}^{(1)} \in A_{1}$.
  The claim is that $t_{g}^{(0)} \in K_{0}$
  if and only if $s_{g}^{(0)}\in K_0$.

  By symmetry, it suffices to show that if $t_{g}^{(0)} \in K_{0}$,
  then $s_{g}^{(0)} \in K_{0}$.
  Assume that $t_{g}^{(0)} \in K_{0}$.
  This means that there exists some $a_{1} \in A_{1}$ such that $(t_{g}^{(0)},a_{1}) \in K$,
  i.e., such that $t_{g}^{(0)} + a_{1} = 0$ in $A$.
  We can write
  \begin{equation}
    s_{g}^{(0)} + s_{g}^{(1)} - t_{g}^{(0)} - t_{g}^{(1)} = 0.
  \end{equation}
  Therefore, substituting $t_{g}^{(0)} = -a_{1}$, we see that
  \begin{equation}
    s_{g}^{(0)} + (s_{g}^{(1)} + a_{1} - t_{g}^{(1)}) = 0,
  \end{equation}
  and the right term of the sum is in $A_{1}$.
  Hence, $s_{g}^{(0)} \in K_{0}$ as well.

  Let us now check that $H \subseteq G$ is a subgroup.
  For $e \in G$, we have $\tau(e) = 0$.
  Therefore, we can choose~$t_{e}^{(0)} = 0 \in K_{0}$, and thus $e \in H$.

  Suppose now that $g, h \in H$.
  We want to show that $hg \in H$ as well.
  We have
  \begin{equation}
    \tau(hg) = \rho_{X}(h)(\tau(g)) + \tau(h).
  \end{equation}
  Let us write
  \begin{equation}
    \begin{aligned}
      \tau(g) &= t_{g}^{(0)} + t_{g}^{(1)},
      \tau(h) &= t_{h}^{(0)} + t_{h}^{(1)}
    \end{aligned}
  \end{equation}
  with $t_{g}^{(0)},t_{h}^{(0)} \in A_{0}$
  and $t_{g}^{(1)},t_{h}^{(1)} \in A_{1}$.
  Then we can write
  \begin{equation}
    \begin{aligned}
      \tau(hg)
      &= \rho_{X}(h)(t_{g}^{(0)} + t_{g}^{(1)}) + t_{h}^{(0)} + t_{h}^{(1)} \\
      &= (t_{g}^{(0)} + t_{h}^{(0)}) + (\rho_{X}(h)(t_{g}^{(1)}) + t_{h}^{(1)}),
    \end{aligned}
  \end{equation}
  and since $\rho_{X}(h)(t_{g}^{(1)}) \in A_{1}$,
  the left term is in $A_{0}$ and the right term is in $A_{1}$.
  Therefore, since $K_{0}$ is a subgroup,
  we deduce that the left term is in $K_{0}$, and thus $hg \in H$.
\end{proof}

We now come to the description of the fibers of \eqref{equation:albanese-morphism-to-be}
as quotients.

\begin{proposition}
  \label{proposition:fibers}
  For each $a_{0} \in A_{0}$, we have
  \begin{equation}
    \label{equation:fibers}
    f^{-1}([a_{0}+K_{0}]_{G}) = (\{ a_{0} \}+A_{1})/H.
  \end{equation}
  In particular, the fibers of $f \colon X \to B$ are either abelian varieties
  or hyperelliptic varieties.
\end{proposition}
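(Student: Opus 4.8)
The plan is to build on \cref{lemma:explicit-fibers}, which already identifies the fiber as $r(\{a_0\} + A_1) \subseteq A/G$, and to recognize this image as the quotient $(\{a_0\}+A_1)/H$. The crux is to show that $H$ is precisely the subgroup of $G$ that stabilizes the coset $\{a_0\}+A_1$, and that the restriction of $r$ to this coset has the $H$-orbits as its fibers.

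First I would record the identity $A_0 \cap A_1 = K_0$ inside $A$: an element $k_0 \in A_0$ lies in $K_0$ if and only if there is some $k_1 \in A_1$ with $k_0 + k_1 = 0$, i.e.\ if and only if $-k_0 \in A_1$, i.e.\ if and only if $k_0 \in A_0 \cap A_1$. Next, for $g \in G$ and $a_1 \in A_1$, using \eqref{equation:tau} together with $\rho_X(g)(a_0) = a_0$ (as $G$ acts trivially on $A_0$) and $\rho_X(g)(a_1) \in A_1$ (\cref{lemma:linear-action}), I compute
\[
  g(a_0 + a_1) = a_0 + t_g^{(0)} + \bigl(\rho_X(g)(a_1) + t_g^{(1)}\bigr),
\]
where the bracketed term lies in $A_1$. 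Hence $g(a_0+a_1) \in \{a_0\}+A_1$ if and only if $t_g^{(0)} \in A_1$, equivalently $t_g^{(0)} \in A_0 \cap A_1 = K_0$, equivalently $g \in H$. In particular this condition is independent of $a_1$, so every $g \in H$ maps the coset $\{a_0\}+A_1$ bijectively onto itself, yielding a free (since $H \subseteq G$ acts freely on $A$) action of $H$ on $\{a_0\}+A_1$.

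With this in hand I would identify the equivalence relation cut out by $r$ on the coset with the $H$-orbit relation: two points $x, y \in \{a_0\}+A_1$ satisfy $r(x)=r(y)$ if and only if $y = g(x)$ for some $g \in G$, and since such a $g$ sends a point of the coset into the coset, the previous step forces $g \in H$; the converse is immediate. Therefore $r$ induces a bijective morphism $(\{a_0\}+A_1)/H \to f^{-1}([a_0+K_0]_G)$, which is an isomorphism onto the fiber because $r$ is \'etale and both the $G$- and $H$-actions are free. This establishes \eqref{equation:fibers}.

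Finally, for the dichotomy I would fix the base point $a_0$ to identify the torsor $\{a_0\}+A_1$ with the abelian variety $A_1$; under this identification $g \in H$ acts by $a_1 \mapsto \rho_X(g)(a_1) + \bigl(t_g^{(0)}+t_g^{(1)}\bigr)$, a biholomorphism with linear part $\rho_X(g)|_{A_1}$. As $H$ acts freely, the quotient is smooth and projective; it is an abelian variety when every $\rho_X(g)|_{A_1}$ is trivial, so that $H$ acts by translations, and a hyperelliptic variety otherwise, directly by \cref{definition:hyperelliptic}. I expect the only genuinely delicate point to be the characterization of $H$ as the stabilizer of the coset, which rests on the identity $A_0\cap A_1 = K_0$; once that is clean, the remaining steps are formal.
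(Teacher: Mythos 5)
Your proposal is correct and follows essentially the same route as the paper: starting from \cref{lemma:explicit-fibers}, showing that $g(a_0+a_1)$ lands back in $\{a_0\}+A_1$ exactly when $t_g^{(0)}\in K_0$, i.e.\ $g\in H$, and concluding that the induced map $(\{a_0\}+A_1)/H\to f^{-1}([a_0+K_0]_G)$ is an isomorphism, with the same freeness argument for the abelian/hyperelliptic dichotomy. Your only (welcome) streamlining is to make explicit the identity $K_0=A_0\cap A_1$, which the paper uses implicitly via the elements $k_1\in K_1$ with $t_h^{(0)}=-k_1$.
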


\begin{proof}
  We have seen in \Cref{lemma:explicit-fibers} that~$f^{-1}([a_{0} + K_{0}]_{G})=r(\{ a_{0} \} + A_{1}) \subseteq A/G$.
  We consider now the induced surjective morphism
  \begin{equation}
    r|_{\{a_{0}\} + A_{1}} \colon \{a_{0}\} + A_{1} \to f^{-1}([a_{0} + K_{0}]_{G}).
  \end{equation}
  The subvariety $\{ a_{0} \} + A_{1}$ is invariant under the action of the subgroup $H$,
  because for all $a_{1} \in A_{1}$ and all $h \in H$ we can find some $k_{1} \in K_{1}$
  such that $t_{h}^{(0)} = -k_{1}$, hence
  \begin{equation}
    h(a_{0} + a_{1})
    =
    a_{0} + t_{h}^{(0)} + \rho_{X}(h)(a_{1}) + t_{h}^{(1)}
    =
    a_{0} - k_{1} +  \rho_{X}(h)(a_{1}) + t_{h}^{(1)} \in \{ a_{0} \} + A_{1}.
  \end{equation}
  Moreover, the restriction $r|_{\{a_{0}\} + A_{1}}$ takes constant values on the $H$-orbits,
  because $r$ is just the geometric quotient for the $G$-action on $A$ and $H \subseteq G$.
  Therefore, we obtain a surjective morphism
  \begin{equation}
    \bar{r} \colon (\{ a_{0} \} + A_{1})/ H \to f^{-1}([a_{0} + K_{0}]_{G}).
  \end{equation}
  Let us show that this morphism is also injective.
  By definition, we have
  \begin{equation}
    r^{-1}(f^{-1}([a_{0}+K_{0}]_{G}))
    =
    r^{-1}(r(\{a_{0}\} + A_{1}))
    =
    \{ g (a_{0}+a_{1}) \mid a_{1} \in A_{1}, g \in G \}.
  \end{equation}
  For each $g \in G$, since $\rho_{X}(g)(a_{0}) = a_{0}$ by definition of~$A_0$,
  $\rho_{X}(g)(a_{1}) \in A_{1}$ by \cref{lemma:linear-action},
  and $t_{g}^{(1)} \in A_{1}$ by construction, we have
  \begin{equation}
    g(a_{0}+a_{1})
    =
    a_{0} + a_{1}' + t_{g}^{(0)}
  \end{equation}
  for some $a_{1}' \in A_{1}$.
  From this we see that an element $g(a_{0}+a_{1}) \in r^{-1}(r(\{a_{0}\}+ A_{1}))$ is in $\{a_{0}\} + A_{1}$
  if and only if $t_{g}^{(0)} \in K_{0}$, i.e., if and only if $g \in H$.
  Therefore,
  \begin{equation}
    \bar{r} \colon (\{a_{0}\} + A_{1})/H \to f^{-1}([a_{0} + K_{0}]_{G})
  \end{equation}
  is an isomorphism.

  Finally, note that $G$ acts freely on $A$, so $H \subseteq G$ acts freely on $\{a_{0}\} + A_{1}$.
  If every automorphism in $H$ acts as a translation on the abelian variety $\{a_{0}\} + A_{1}$,
  then $(\{a_{0}\} + A_{1})/H$ is also an abelian variety.
  Otherwise, $H$ will contain automorphisms which are not translations,
  and $(\{a_{0}\} + A_{1})/H$ is a hyperelliptic variety.
\end{proof}

\paragraph{Verifying that the candidate is the Albanese}
We now arrive at the final step of the proof.

\begin{lemma}
  \label{lemma:factorization}
  Let $\operatorname{alb}_{X} \colon X \to \Alb(X)$ be the Albanese morphism of $X$.
  The morphism $f \colon X \to B$ in \eqref{equation:albanese-morphism-to-be} factors as
  \begin{equation}
    \begin{tikzcd}
      X \arrow{r}{\operatorname{alb}_{X}} \arrow[swap, rd, "f"] & \Alb(X) \arrow{d}{\bar{f}} \\
      & B,
    \end{tikzcd}
  \end{equation}
  where $\bar{f}$ is an isogeny.
\end{lemma}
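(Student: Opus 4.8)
The plan is to obtain~$\bar{f}$ formally from the universal property of the Albanese morphism, and then to recognise it as an isogeny through a dimension count combined with the surjectivity of~$f$.

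First I would apply the universal property recalled above to the morphism $f\colon X\to B$ of \eqref{equation:albanese-morphism-to-be}. Since $B=(A_0/K_0)/G$ is an abelian variety by \cref{lemma:candidate-albanese-variety-is-abelian}, and in particular a complex torus, there is a unique morphism $\bar{f}\colon\Alb(X)\to B$ with $\bar{f}\circ\alb_X=f$; this is exactly the commutative triangle in the statement. To treat $\bar{f}$ as an isogeny I would first check that it is a homomorphism. Recall from \cref{lemma:albanese-morphism} that $f$ is induced by the surjective homomorphism $r_0\circ\bar{p}_0\circ\bar{\mu}^{-1}\colon A\to B$ in the bottom row of the diagram there, a composite of homomorphisms whose last factor $r_0$ is the quotient by the translation action of~$G$ (\cref{lemma:action}); in particular it sends $0$ to $0_B$. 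Normalising the Albanese map so that $\alb_X(r(0))=0$, we then get $\bar{f}(0)=\bar{f}(\alb_X(r(0)))=f(r(0))=0_B$. Since a morphism of abelian varieties carrying $0$ to $0$ is automatically a homomorphism, $\bar{f}$ is a homomorphism.

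Next I would establish surjectivity and equality of dimensions. As $f$ is surjective by \cref{lemma:albanese-morphism} and factors through~$\bar{f}$, the homomorphism $\bar{f}$ is surjective. For the dimensions, $\dim\Alb(X)=q_X$ by construction of the Albanese variety, while on the other side $G$ acts on $A_0/K_0$ by translations (\cref{lemma:action}), so $\dim B=\dim(A_0/K_0)=\dim A_0$, the last step because $K_0\cong K=\ker\mu$ (\cref{remark:subgroup}) is finite, $\mu$ being an isogeny (\cref{lemma:linear-action}). Finally, from \eqref{equation:subvariety} the subvariety $A_0=V_0/\Lambda_0$ has $V_0=\bigcap_{g\in G}\ker(\tilde{\rho}_X(g)-\mathrm{id})=V^G$, so $\dim A_0=\dim V^G$, which equals $q_X$ by \cref{remark:irregularity}. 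Hence $\dim B=\dim\Alb(X)$.

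To conclude, $\bar{f}$ is a surjective homomorphism of abelian varieties of equal dimension, so its kernel is a closed subgroup of dimension~$0$ and therefore finite; thus $\bar{f}$ is an isogeny. The one step carrying real content is the dimension computation $\dim B=\dim A_0=\dim V^G=q_X$, for which the essential input is \cref{remark:irregularity}; the remaining ingredients---the universal property, the inherited surjectivity, and the fact that an equidimensional surjection of abelian varieties is an isogeny---are formal. The main obstacle is therefore simply to pin down~$\dim B$, and once the translation action of~$G$ on $A_0/K_0$ and the finiteness of~$K_0$ are used, this reduces cleanly to the irregularity formula.
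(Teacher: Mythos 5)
Your proposal is correct and follows essentially the same route as the paper: the factorization comes from the universal property of the Albanese, and $\bar{f}$ is an isogeny because it is surjective (inherited from $f$) between abelian varieties of equal dimension, your dimension count $\dim B=\dim A_0=\dim V^G=q_X$ being the same computation the paper performs via $\HH^0(X,\Omega^1_X)\cong\HH^0(A,\Omega^1_A)^G\cong V_0^\vee$. The only difference is that you additionally verify that $\bar{f}$ is a homomorphism after normalising the Albanese map, a point the paper leaves implicit.
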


\begin{proof}
  The universal property of the Albanese yields the factorization in the statement.
  Recall from \eqref{equation:albanese-variety} that~$\Alb(X) \cong \HH^0(X,\Omega^1_X)^\vee/\mathrm{im}(\pi_{1}(X))$.
  Moreover, $\HH^0(X,\Omega^1_X) \cong \HH^0(A,\Omega^1_A)^G$,
  which is isomorphic to the dual of $V_0$.
  It follows that
  \begin{equation}
    \dim \Alb(X) = \dim{V_{0}} = \dim B.
  \end{equation}
  Since $f$ is surjective, so is $\bar{f}$.
  Hence, $\bar{f}$ is an isogeny.
\end{proof}

We now come to the proof of the main result.
Recall that we defined~$A_0$ in \eqref{equation:subvariety},
and~$A_1$ in \eqref{equation:second-subvariety}.
The group~$K_0$ acting on~$A_0$ is introduced in \cref{remark:subgroup},
and the action of~$G$ on~$A_0/K_0$ is discussed in \cref{lemma:action}.
The subgroup~$H\subseteq G$ is introduced in \cref{lemma:subgroup}.
With this notation, we have the following.

\begin{theorem}
  \label{theorem:albanese-morphism}
  The Albanese morphism~$\alb_X\colon X\to\Alb(X)$ is
  the morphism in~$f\colon X\to B$ from \eqref{equation:albanese-morphism-to-be}.
  This means that
  \begin{enumerate}
    \item $\Alb(X)\cong(A_0/K_0)/G$;
    \item for every~$a_0\in A_0$
      we have that the Albanese fiber is~$\alb_X^{-1}([a_0+K_0]_G)\cong(\{a_0\}+A_1)/H$.
  \end{enumerate}
\end{theorem}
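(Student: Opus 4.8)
The plan is to combine the factorization from \cref{lemma:factorization} with the fiber description from \cref{proposition:fibers}, using connectedness to upgrade the isogeny~$\bar{f}$ to an isomorphism. By \cref{lemma:factorization} we already have a commutative triangle $f = \bar{f}\circ\alb_X$ in which $\bar{f}\colon\Alb(X)\to B$ is an isogeny, so it remains only to show that~$\bar{f}$ is in fact an isomorphism. Once this is established, conclusions~(1) and~(2) follow immediately from the definition $B=(A_0/K_0)/G$ in \eqref{equation:candidate-albanese} and from \cref{proposition:fibers}, since the isomorphism~$\bar{f}$ then identifies~$f$ with~$\alb_X$ together with all of its fibers.

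To see that~$\bar{f}$ has trivial kernel, I would analyse the fibers of~$f$ through this factorization. For any point $b\in B$ we have $f^{-1}(b)=\alb_X^{-1}(\bar{f}^{-1}(b))$, and since~$\bar{f}$ is an isogeny the preimage~$\bar{f}^{-1}(b)$ is a coset of~$\ker\bar{f}$, hence consists of exactly~$|\ker\bar{f}|$ points. By \cref{lemma:fiber-bundle-structure} the Albanese morphism is an \'etale fiber bundle, so it is surjective and all of its fibers are connected; thus~$\alb_X^{-1}$ of each of these points is nonempty and connected, and the preimages of distinct points are disjoint. Consequently~$f^{-1}(b)$ has exactly~$|\ker\bar{f}|$ connected components.

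On the other hand, \cref{proposition:fibers} identifies~$f^{-1}(b)$ with $(\{a_0\}+A_1)/H$, which is the quotient of the connected variety $\{a_0\}+A_1$ by the finite group~$H$ and is therefore connected. Comparing the two counts forces~$|\ker\bar{f}|=1$, so~$\bar{f}$ is an isomorphism and~$f$ may be identified with~$\alb_X$.

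The main obstacle is precisely this upgrade from isogeny to isomorphism: the dimension count in \cref{lemma:factorization} only pins down~$\bar{f}$ up to a finite kernel, and the content of the theorem is that no such kernel appears. The connectedness of the Albanese fibers from \cref{lemma:fiber-bundle-structure}, together with the explicit connected fibers of~$f$ from \cref{proposition:fibers}, is exactly what rules this out. I expect the only care needed is to confirm that~$\alb_X$ is surjective, so that every point of~$\bar{f}^{-1}(b)$ contributes a nonempty component; this again follows from the fiber bundle structure, as an \'etale fiber bundle is in particular surjective.
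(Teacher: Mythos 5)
Your proposal is correct and follows essentially the same route as the paper: both use the factorization $f=\bar f\circ\alb_X$ from \cref{lemma:factorization} and the connectedness of the fibers of $f$ from \cref{proposition:fibers} to force the isogeny $\bar f$ to be an isomorphism. Your component count via the connected Albanese fibers of \cref{lemma:fiber-bundle-structure} is just a slightly more explicit version of the paper's one-line observation that connected fibers of $f$ yield connected (hence singleton) fibers of $\bar f$.
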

Thus, the Albanese fibers are either abelian varieties or hyperelliptic varieties,
and we can precisely describe them.

\begin{proof}
  It suffices to show that the isogeny $\bar{f} \colon \Alb(X) \to B$
  in \Cref{lemma:factorization} is an isomorphism,
  for which it suffices to show that it has connected fibers.
  But by \Cref{proposition:fibers}, the fibers of $f$ are connected,
  so the fibers of $\bar{f}$ must be connected as well.
\end{proof}
The first part of the statement in \Cref{theorem:albanese-morphism}
is a generalization of \cite[Proposition 25]{MR3404712},
which assumes that $G$ is cyclic.

\begin{remark}
  \label{remark:existing}
  The statement of \cref{theorem:albanese-morphism}
  is related to Ueno's proof of the characterization of Kummer manifolds of Kodaira dimension~0
  (not to be confused with hyperk\"ahler varieties of~$\mathrm{Kum}^n$-type,
  which are a very special case),
  as can be seen by inspecting the second half of the proof of \cite[Theorem~7.13]{MR360582}.
  In op.~cit.~a Kummer manifold is a nonsingular model of the quotient of an abelian variety~$A$
  by a finite group~$G$ (without conditions on the action),
  and the characterization is given in terms of a birational model
  being an \'etale fiber bundle whose fiber
  is again a Kummer manifold of Kodaira dimension~0.

  Hyperelliptic varieties are a special and particularly explicit case of these,
  and \cref{theorem:albanese-morphism} upgrades the results
  from \cite[\S7]{MR360582}
  to more explicit and fully effective results in this special case.
\end{remark}

\begin{remark}
  \label{remark:torsion-order-of-fiber}
  Let $F$ be a (general) fiber of the Albanese morphism of $X$.
  Then $\mathrm{K}_{F}$ is also torsion, and its order divides the order of $\mathrm{K}_{X}$.
  Indeed, the normal bundle of $F$ in $X$ is trivial, so by adjunction we have $\mathrm{K}_{X}|_{F} = \mathrm{K}_{F}$.
  Therefore, if $m\mathrm{K}_{X} \sim 0$ for some integer $m > 0$, then $m\mathrm{K}_{F} \sim 0$ as well.
  The explicit description of $F$ in \Cref{theorem:albanese-morphism} confirms this, because $H$ is a subgroup of $G$.
\end{remark}

\section{Fibers of the Albanese morphism}
\label{section:albanese-fibers}

In this section we take a closer look at the possible fibers of the Albanese morphism.
First we study the case of cyclic hyperelliptic varieties,
where the Albanese fiber is again cyclic,
and we give a useful sufficient condition for the Albanese fibers to be abelian varieties.
To illustrate these results we discuss the well-known case of bielliptic surfaces.
Conversely,
for a given abelian variety we give a construction of a (cyclic) hyperelliptic variety
with that abelian variety as Albanese fiber.
Finally, we study the Albanese fibers of hyperelliptic threefolds,
observing the interesting phenomenon that not all bielliptic surfaces appear as fiber.

\paragraph{Albanese fibers of cyclic hyperelliptic varieties}
We start by making the following observation:
\begin{lemma}
  \label{lemma:cyclic-fibers}
  Let $X = A/G$ be a cyclic hyperelliptic variety.
  Then its Albanese fibers are either abelian varieties
  or lower-dimensional cyclic hyperelliptic varieties.
\end{lemma}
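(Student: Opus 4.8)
The plan is to read everything off from the explicit description of the Albanese fiber furnished by \Cref{theorem:albanese-morphism}, combined with two elementary facts about cyclic hyperelliptic varieties. By part~(2) of that theorem, each Albanese fiber is isomorphic to $(\{a_0\}+A_1)/H$, where $H\subseteq G$ is the subgroup introduced in \Cref{lemma:subgroup}. Since $G$ is cyclic by hypothesis, every subgroup of $G$ is cyclic, so in particular $H$ is cyclic. This is the only genuinely new algebraic input.

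Next I would invoke the dichotomy already established at the end of \Cref{proposition:fibers}: the group $H$ acts freely on the (translate of the) abelian variety $\{a_0\}+A_1$, and either every element of $H$ acts by a translation, in which case the fiber is itself an abelian variety, or $H$ contains an automorphism which is not a translation, in which case the fiber is a hyperelliptic variety. In the latter case the quotient is taken by the cyclic group $H$, so by \Cref{definition:bdf} the resulting hyperelliptic variety is \emph{cyclic}. This already yields the stated dichotomy between abelian varieties and cyclic hyperelliptic varieties.

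It then remains to check that these fibers are lower-dimensional. The fiber has dimension $\dim A_1$, and since the addition map $\mu$ of \eqref{equation:mu} is an isogeny we have $\dim A_1=\dim X-\dim A_0$. By \Cref{lemma:factorization} we know $\dim A_0=\dim V_0=\dim\Alb(X)=q_X$, so every fiber has dimension $\dim X-q_X$. Because $X$ is cyclic, \Cref{remark:cyclic-irregularity-bound} gives $q_X\geq 1$, whence $\dim A_1=\dim X-q_X<\dim X$, as required. (The bound applies uniformly to all fibers; it is only recorded for the hyperelliptic ones, since those are the ones qualified as \emph{lower-dimensional} in the statement.)

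I do not anticipate a serious obstacle: the entire content is already packaged in the structural results proved earlier, and the proof amounts to combining the observation that subgroups of a cyclic group are cyclic with the positivity of the irregularity from \Cref{remark:cyclic-irregularity-bound}. The only point worth stating with a little care is that $\{a_0\}+A_1$ should be regarded as (a torsor under, hence isomorphic to) an abelian variety, so that \Cref{proposition:fibers} applies verbatim.
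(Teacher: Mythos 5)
Your proposal is correct and follows essentially the same route as the paper: read the fiber off from \Cref{theorem:albanese-morphism} as $(\{a_0\}+A_1)/H$, note that $H$ is cyclic as a subgroup of the cyclic group $G$, and obtain the dimension drop from $q_X\geq 1$ via \Cref{remark:cyclic-irregularity-bound}. The paper's proof is just a more compressed version of exactly this argument.
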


\begin{proof}
  The explicit description in \Cref{theorem:albanese-morphism}
  shows that the fibers are either abelian or cyclic hyperelliptic,
  because $H$ is a subgroup of the cyclic group $G$,
  hence cyclic itself.
  The dimension bound follows from \Cref{remark:cyclic-irregularity-bound}.
\end{proof}

The following result gives some sufficient condition
for the Albanese fibers of a cyclic hyperelliptic variety
to be abelian varieties.

\begin{proposition}
  \label{proposition:cyclic-implies-abelian-fibers}
  Let $X = A/G$ be a cyclic hyperelliptic variety and $g$ a generator of $G$.
  Suppose that, besides the eigenvalue $1$,
  all eigenvalues of $\tilde{\rho}_X(g)$ are roots of unity of the same order.
  Then the Albanese fibers of $X$ are isomorphic to the abelian variety~$A_1$
  from \cref{lemma:linear-action}.
\end{proposition}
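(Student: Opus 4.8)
The plan is to prove the stronger statement that the subgroup $H \subseteq G$ from \cref{lemma:subgroup} is trivial. Once this is known, the fiber description $f^{-1}([a_0+K_0]_G) = (\{a_0\}+A_1)/H$ of \cref{proposition:fibers} (equivalently part~(2) of \cref{theorem:albanese-morphism}) collapses to the coset $\{a_0\}+A_1$, which is a translate of the abelian variety $A_1$ and hence isomorphic to it.

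First I would pin down the order of $g$. Since $G$ contains no translations, the complex representation $\tilde{\rho}_X$ is faithful by \cref{remark:complex-representation}, so the order of $g$ equals the order of the finite-order, hence diagonalizable, matrix $\tilde{\rho}_X(g)$. By hypothesis its spectrum consists of the eigenvalue $1$ together with roots of unity all of some common order~$m$, so the order of the matrix is the least common multiple of the orders of its eigenvalues, namely~$m$. Thus $|G| = m$, the elements of $G$ are $e, g, \dots, g^{m-1}$, and $g^k = e$ exactly when $m \mid k$. Next I would record how $h = g^k \in H$ acts on the fiber: identifying $\{a_0\}+A_1$ with $A_1$ via $a_0+a_1 \leftrightarrow a_1$, the computation carried out in the proof of \cref{proposition:fibers} shows that $h$ acts as an affine automorphism $a_1 \mapsto L(a_1)+c$ whose linear part $L = \rho_X(g^k)|_{A_1}$ is induced by $\tilde{\rho}_X(g)^k|_{V_1}$, and with translation part $c = t_h^{(1)} - k_1 \in A_1$.

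The crux of the argument is then a fixed-point computation. For $0 < k < m$ the eigenvalues of $L$ are the $k$-th powers of the order-$m$ roots of unity occurring on $V_1$; since such a root $\zeta$ satisfies $\zeta^k \neq 1$, the map $L - \mathrm{id}$ is invertible on $V_1$. As $L$ preserves $\Lambda_1$ by \cref{lemma:linear-action}, it follows that $L - \mathrm{id}$ is an isogeny of $A_1$, in particular surjective, so the equation $(L-\mathrm{id})(a_1) = -c$ is solvable and $h$ has a fixed point on $\{a_0\}+A_1$. But $G$, and hence $H$, acts freely on $A$, and $\{a_0\}+A_1$ is $H$-invariant, so no nontrivial element of $H$ can fix a point there. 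This forces $H = \{e\}$ and yields the claimed isomorphism of the Albanese fiber with $A_1$.

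I expect the only genuinely load-bearing step to be the verification that the hypothesis excludes the eigenvalue $1$ from every nontrivial power $\tilde{\rho}_X(g)^k|_{V_1}$ with $0<k<m$; this is precisely what makes $L-\mathrm{id}$ an isogeny and produces the forbidden fixed point, and it is also where the assumption of a \emph{single common} order~$m$ is used (if distinct orders occurred, some intermediate power could fix $V_1$ pointwise and the argument would break down). Everything else is bookkeeping with the decomposition $A_0 \times A_1$ and the already-established description of the $G$-action.
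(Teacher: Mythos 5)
Your proposal is correct and follows essentially the same route as the paper: both reduce the statement to showing that the subgroup $H$ of \cref{lemma:subgroup} is trivial, and both do so by observing that the eigenvalue hypothesis makes $\rho_X(g^k)|_{A_1}-\mathrm{id}$ surjective for $0<k<\#G$, so that any $g^k\in H$ would acquire a fixed point, contradicting freeness. The only cosmetic differences are that you locate the fixed point on the fiber $\{a_0\}+A_1$ rather than on $A$ itself, and that you make explicit the identification of the common eigenvalue order with $\#G$ via faithfulness of $\tilde{\rho}_X$, which the paper leaves implicit.
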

The condition on the eigenvalues holds whenever~$G\cong\mathbb{Z}/p\mathbb{Z}$
with~$p$ prime.

\begin{proof}
  As described at the beginning of \cref{section:albanese-morphism}, we may write
  \begin{equation}
    A = (A_0 \times A_1)/K,
  \end{equation}
  where a generator $g$ of $G$ acts on $A_0$ by translation and $\rho_X(g)|_{A_1}$
  does not have the eigenvalue $1$.
  As explained in~\cite[Lemma 3.5]{MR1862215},
  we can write the action of $g$ on $A = (A_0 \times A_1)/K$ as
  \begin{equation}
    g((a_0,a_1) + K) = (a_0 + t_0, ~ \alpha a_1) + K,
  \end{equation}
  where $\alpha \colon A_1 \to A_1$ is a Lie group automorphism of $A_{1}$ of order dividing $\#G$ and $t_{0} \in A_{0}$.
  Our assumption on the eigenvalues then means that $\alpha^m$ does not have the eigenvalue $1$ for every $1 \leq m < \#G$.
  If $H$ denotes the subgroup from \cref{lemma:subgroup}, the freeness of the action of $G = \langle g \rangle$
  implies that $mt_0 \notin K_{0}$ for every $1 \leq m < \#G$, i.e., $H$ is trivial.
  Indeed, suppose on the contrary that $mt_{0} \in K_{0}$,
  so that we can find some $k_{1} \in K_{1}$ with $(mt_{0},k_{1}) \in K$.
  We have
  \begin{equation}
    g^{m}((a_{0},a_{1})+K) = (a_{0} + mt_{0}, \alpha^{m} a_{1}) + K,
  \end{equation}
  and the assumption on the eigenvalues implies that $\alpha^{m}-\operatorname{id}$ is invertible.
  Therefore, we can find some $a_{1} \in A_{1}$ such that $\alpha^{m}(a_{1}) - a_{1} = k_{1}$.
  Hence we have
  \begin{equation}
    g^{m}((0,a_{1})+K) = (mt_{0}, \alpha^{m} a_{1}) + K = (0, \alpha^{m} a_{1} - k_{1}) + K = (0, a_{1}) + K,
  \end{equation}
  so that $(0,a_{1}) + K$ is fixed by $g^{m}$, a contradiction.
  Therefore, $H = 0$.
  By \cref{proposition:fibers}, we obtain that the Albanese fibers of $X$ are translates of $A_1$.
\end{proof}

\begin{example}
  \label{example:bielliptic-fibers}
  The assumption on the eigenvalues in \cref{proposition:cyclic-implies-abelian-fibers} is necessary
  for the conclusion,
  as the following example shows.
  Let
  \begin{equation}
    A \colonequals (E_{\tau_0} \times E_{\tau_1} \times E_{i})/K, \qquad K \colonequals \left\langle \left( \frac12, \frac12, 0 \right)\right\rangle, \qquad E_{\tau_j} \colonequals \mathbb{C}/(\mathbb{Z} + \tau_j \mathbb{Z}) \text{ for } \tau_j \in \mathbb{H}, \qquad E_i \colonequals \mathbb{C}/(\mathbb{Z} + i \mathbb{Z}),
  \end{equation}
  where $\mathbb{H}$ denotes the upper half-plane.
  Define a biholomorphic automorphism $\tilde{g}$ of $E_{\tau_0} \times E_{\tau_1} \times E_{i}$ via
  \begin{equation}
    \tilde{g}(z_0,z_1,z_2) \colonequals \left(z_0 + \frac14, -z_1, iz_2\right).
  \end{equation}
  The linear part of $\tilde{g}$ fixes $K$,
  so that $\tilde{g}$ descends to a biholomorphic automorphism $g$ of $A$.
  We claim that~$A/G$ is a hyperelliptic threefold
  with holonomy group $G \colonequals \langle g  \rangle \cong \mathbb{Z}/4\mathbb{Z}$.
  Since
  \begin{equation}
    g^j((z_0,z_1,z_2) + K) = \left(z_0 + \frac{j}{4}, (-1)^j z_1, i^j z_2\right) + K,
  \end{equation}
  the group $G = \langle g \rangle \subseteq \Bihol(A)$
  has order $4$ and contains no translations.
  It is clear from the definitions that $g$ and $g^3$ act freely on $A$.
  Furthermore, $g^2$ acts freely as well, since
  \begin{equation}
    g^2((z_0,z_1,z_2) + K) = (z_0,z_1,z_2) + K \iff \left(\frac12, 0, -2z_2\right) \in K,
  \end{equation}
  which is never satisfied, regardless of the choice of $(z_0,z_1,z_2)$.
  This completes the proof that $X \colonequals A/G$
  is hyperelliptic with cyclic holonomy group of order $4$.

  We shall now calculate the Albanese fiber of $X$
  and show that it is a bielliptic surface.
  Using the notation of \cref{section:albanese-morphism}, we have
  \begin{equation}
    A_0 = E_{\tau_0}, \qquad A_1 = E_{\tau_1} \times E_i, \qquad K_0 = \left\langle \frac12 \right\rangle \subseteq A_0, \qquad K_1 =  \left\langle \left(\frac12, 0\right) \right\rangle \subseteq A_1,
  \end{equation}
  so that the subgroup $H \subseteq G$ introduced in \cref{lemma:subgroup} is generated by $g^2$.
  To obtain an explicit description of the Albanese fiber $A_1/H$ of $X$, recall that
  \begin{equation}
    g^2((z_0,z_1,z_2) + K) = \left(z_0, z_1 + \frac12, -z_2\right) + K,
  \end{equation}
  so that $H = \langle g^2 \rangle$ acts on $A_1$ via $g^2(z_1,z_2) = \left(z_1 + \frac12, -z_2\right)$.
  Thus $A_1/H$ is a bielliptic surface with holonomy group $\mathbb{Z}/2\mathbb{Z}$.
\end{example}

Conversely, the following gives a construction of an hyperelliptic variety
with a given abelian variety as Albanese fiber.

\begin{proposition}
  \label{proposition:abelian-fibers}
  Let $A_1$ be an abelian variety of dimension $m$.
  Then, for every $n \geq 1$, there exists a hyperelliptic variety of dimension $n + m$
  whose Albanese fibers are isomorphic to $A_1$.
\end{proposition}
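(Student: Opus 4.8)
The plan is to generalize the classical description of a bielliptic surface $(E_1\times E_2)/(\mathbb{Z}/2\mathbb{Z})$, where the group acts by inversion on $E_2$ and by a nonzero $2$\dash torsion translation on $E_1$. Concretely, I would fix any abelian variety $B$ of dimension $n$ together with a point $t_0\in B[2]\setminus\{0\}$, which exists because $n\geq 1$, and set $A\colonequals A_1\times B$. I then define the automorphism
\begin{equation}
  g\colon A_1\times B\to A_1\times B,\qquad g(a_1,b)=(-a_1,\,b+t_0),
\end{equation}
and let $G\colonequals\langle g\rangle$. Since $g^2(a_1,b)=(a_1,b+2t_0)=(a_1,b)$, the group $G$ is cyclic of order exactly~$2$ (it is not trivial precisely because $t_0\neq 0$), and $A$ is an abelian variety of dimension $n+m$.

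Next I would verify that $X\colonequals A/G$ is a hyperelliptic variety. The action is free because $g(a_1,b)=(a_1,b)$ would force $b+t_0=b$, contradicting $t_0\neq 0$; and $g$ is not a translation because its linear part is $-\mathrm{id}$ on the factor $A_1$, which is nontrivial as $m\geq 1$. Hence $X$ is a cyclic hyperelliptic variety of the required dimension $n+m$.

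It then remains to identify the Albanese fiber with $A_1$, for which I would unwind the invariants of \cref{section:albanese-morphism}. The linear part $\rho_X(g)$ equals $-\mathrm{id}$ on $A_1$ and $\mathrm{id}$ on $B$, so $A_0=(A_1[2]\times B)^0=\{0\}\times B$; choosing the polarization on $A=A_1\times B$ to be a product, the orthogonal complement of \cref{lemma:linear-action} is exactly $A_1\times\{0\}\cong A_1$, and the addition map $\mu$ is already an isomorphism, so $K=0$. Since $G\cong\mathbb{Z}/2\mathbb{Z}$ has prime order, the eigenvalue hypothesis of \cref{proposition:cyclic-implies-abelian-fibers} holds automatically, and that proposition identifies the Albanese fibers of $X$ with $A_1$, as desired. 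Alternatively, one checks directly that the subgroup $H$ of \cref{lemma:subgroup} is trivial, since $t_g^{(0)}=t_0\notin K_0=\{0\}$, and then \cref{proposition:fibers} gives the fiber $(\{a_0\}+A_1)/H\cong A_1$.

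The construction is essentially forced once one commits to using inversion (which is the one automorphism without the eigenvalue~$1$ available on \emph{every} abelian variety $A_1$), so there is no genuine difficulty in the geometry; the only real constraint is $m\geq 1$. Indeed, if $m=0$ the purported fiber is a point, which would make the Albanese morphism an isomorphism and $X$ an abelian variety, contradicting \cref{lemma:irregularity}; thus the statement must be read for positive\dash dimensional $A_1$. The main thing to get right is therefore the elementary bookkeeping guaranteeing freeness (controlled by $t_0\neq 0$) and non\dash triviality of the holonomy (controlled by the inversion on $A_1$), after which the conclusion is immediate from \cref{proposition:cyclic-implies-abelian-fibers}.
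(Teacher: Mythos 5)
Your construction is exactly the one in the paper: take $A_1\times B$ with $B$ of dimension $n$, act by inversion on $A_1$ and by a nonzero $2$\dash torsion translation on $B$, and conclude via \cref{proposition:cyclic-implies-abelian-fibers}. The paper's proof is just a terser version of yours (it omits the explicit freeness/non-translation checks and the computation of $A_0$, $K$, $H$, which you carry out correctly), so the proposal is correct and takes essentially the same approach.
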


\begin{proof}
 Let $A_0$ be an abelian variety of dimension $n$
 and denote by $\tau \in A_0[2]$ a non-trivial $2$-torsion point of $A_0$.
 According to \cref{proposition:cyclic-implies-abelian-fibers},
 the Albanese fiber of the $(n+m)$-dimensional hyperelliptic variety
 \begin{equation}
   (A_{0} \times A_{1})/G, \qquad G = \langle (z_0,z_1) \mapsto (z_0 + \tau, -z_1)\rangle \cong \mathbb{Z}/2\mathbb{Z}
 \end{equation}
 is $A_1$.
\end{proof}

\paragraph{Albanese fibers of bielliptic surfaces}

Bielliptic surfaces are $2$-dimensional hyperelliptic varieties.
They are fully classified:~there are exactly seven families of bielliptic surfaces,
see \cite[Example 3.2.4]{2211.07998} and the references therein.
All of them are cyclic hyperelliptic varieties to which
\Cref{proposition:cyclic-implies-abelian-fibers} applies,
and their canonical divisors have orders~2, 3, 4, or~6.

\Cref{table:bielliptic-surfaces} below illustrates \cref{theorem:albanese-morphism}
in the case of bielliptic surfaces,
showing the seven families, their Albanese and
their Albanese fibers.
Given the data in the table,
we obtain a bielliptic surface via the action of $G$ on $(A_0 \times A_1)/K$.
Since $G$ is cyclic in all seven cases, we only give the order and the action of a generator of $G$.

\begin{table}[ht]
  \centering
  \begin{tabular}{ccccccc}
    \toprule
    $A_0$    & $A_1$         & $K$                                                    & $\#G$ & action of generator~$g$ of~$G$               & $\Alb(X)$                                           & Albanese fiber \\
    \midrule
    $E_\tau$ & $E_{\tau'}$   & $\{0\}$                                                & $2$   & $g(z_1,z_2) = (z_1 + \frac12, -z_2)$         & $E_\tau/\langle \frac12 \rangle$                    & $E_{\tau'}$ \\\addlinespace
    $E_\tau$ & $E_{\tau'}$   & $\langle(\frac{\tau}{2}, \frac12)\rangle $             & $2$   & $g(z_1,z_2) = (z_1 + \frac12, -z_2)$         & $E_{\tau}/\langle  \frac12, \frac{\tau}{2} \rangle$ & $E_{\tau'}$ \\\addlinespace
    $E_\tau$ & $E_{\zeta_3}$ & $\{0\}$                                                & $3$   & $g(z_1,z_2) = (z_1 + \frac13, \zeta_3 z_2)$  & $E_{\tau}/\langle \frac13 \rangle$                  & $E_{\zeta_3}$ \\\addlinespace
    $E_\tau$ & $E_{\zeta_3}$ & $\langle (\frac{\tau}{3}, \frac{1-\zeta_3}{3})\rangle$ & $3$   & $g(z_1,z_2) = (z_1 + \frac13, \zeta_3 z_2)$  & $E_{\tau}/\langle \frac13, \frac{\tau}{3} \rangle$  & $E_{\zeta_3}$ \\\addlinespace
    $E_\tau$ & $E_i$         & $\{0\}$                                                & $4$   & $g(z_1,z_2) = (z_1 + \frac14, i z_2)$        & $E_{\tau}/\langle \frac14 \rangle$                  & $E_i$ \\\addlinespace
    $E_\tau$ & $E_i$         & $\langle (\frac{\tau}{2}, \frac{1+i}{2})\rangle$       & $4$   & $g(z_1,z_2) = (z_1 + \frac14, i z_2)$        & $E_{\tau}/\langle \frac14, \frac{\tau}{2}\rangle$   & $E_i$ \\\addlinespace
    $E_\tau$ & $E_{\zeta_3}$ & $\{0\}$                                                & $6$   & $g(z_1,z_2) = (z_1 +  \frac16, \zeta_6 z_2)$ & $E_{\tau}/\langle \frac16 \rangle$                  & $E_{\zeta_3}$ \\
    \bottomrule
  \end{tabular}
  \caption{Albanese fibers of bielliptic surfaces}
  \label{table:bielliptic-surfaces}
\end{table}

\paragraph{Albanese fibers of hyperelliptic threefolds}
It follows from \Cref{proposition:abelian-fibers} that
every elliptic curve or abelian surface arises as the Albanese fiber of some
hyperelliptic threefold.
The only other candidate for the Albanese fibers of irregular hyperelliptic threefolds are bielliptic surfaces,
for hyperelliptic threefolds of irregularity~$q_X=1$.
We have already seen in \cref{example:bielliptic-fibers} that
some bielliptic surfaces occur as Albanese fibers
of hyperelliptic threefolds.
The following example gives another construction of (non-cyclic) hyperelliptic threefolds
with bielliptic surfaces as Albanese fiber.
However, we will show in \cref{proposition:not-all-bielliptic-occur}
that not every bielliptic surface occurs,
thus adding an interesting twist to the geography of hyperelliptic varieties.

\begin{example}
  \label{example:hyperelliptic-fibers}
  Let $m \in \{2,3\}$.
  Let $E_1, E_2$ be elliptic curves that admit a linear automorphism of order $m$.
  More precisely, denoting by $\mathbb{H}$ the upper half-plane:
  \begin{itemize}
    \item If $m = 2$, then $E_1, E_2$ can be chosen to be $\mathbb{C}/(\mathbb{Z} + \tau_{i} \mathbb{Z})$ for arbitrary $\tau_1, \tau_2 \in \mathbb{H}$ respectively,
    because every elliptic curve has $-\mathrm{id}$ as an automorphism.
    \item If $m = 3$, then $E_1, E_2$ can be chosen to be $\mathbb{C}/(\mathbb{Z} + \zeta_3 \mathbb{Z})$, where $\zeta_3 = \exp\left(\frac{2 \pi i}{3}\right)$.
  \end{itemize}
  Denote by $\zeta_m$ a primitive $m$th root of unity so that $E_1$ and $E_2$ admit the automorphism $z \mapsto \zeta_m z$.
  Furthermore, let $E_0 = \mathbb{C}/(\mathbb{Z} + \tau_0 \mathbb{Z})$ for an arbitrary $\tau_0 \in \mathbb{H}$.

  Let $\tilde{g}_{1}, \tilde{g}_{2} \in \Aut(E_0 \times E_1 \times E_2)$ be the automorphisms defined by
  \begin{equation}
    \begin{aligned}
      \tilde{g}_1(z_0, z_1,z_2) &\colonequals \left(z_0 + ~ \frac{1}{m}, ~ \zeta_m z_1, ~ z_2 \right), \\
      \tilde{g}_2(z_0, z_1,z_2) &\colonequals \left(z_0 + ~ \frac{\tau_3}{m}, ~ z_1, ~ \zeta_m z_2 \right).
    \end{aligned}
  \end{equation}
  Consider the induced action of $\tilde{G} \colonequals \langle \tilde{g}_{1}, \tilde{g}_{2} \rangle \cong (\mathbb{Z}/m\mathbb{Z})^{2}$ on $E_0 \times E_1 \times E_2$.
  Moreover, let
  \begin{equation}
    t \colonequals
    \begin{cases}
      1/2, & \text{if } m = 2, \\
      (1-\zeta_3)/3, & \text{if } m = 3.
    \end{cases}
  \end{equation}
  Then $t \neq 0$ is a fixed point of $z \mapsto \zeta_m z$ on $E_1$ and $E_2$.

  Consider the subgroup $K \colonequals \langle (\frac1m, 0,t)\rangle$ of $E_0 \times E_1 \times E_2$.
  The linear parts of $\tilde{g}_1$ and $\tilde{g}_2$ map $K$ to $K$, respectively, so that $\tilde{g}_1$ and $\tilde{g}_2$
  descend to automorphisms $g_{1}$ and $g_{2}$ of $A \colonequals (E_0 \times E_1 \times E_2)/K$ given by the same formulae.

  One checks that $G \colonequals \langle g_1, g_2\rangle \subseteq \Aut(A)$, is still isomorphic to $(\mathbb{Z}/m\mathbb{Z})^2$ and acts freely on $A$.
  Hence $X \colonequals A/G$ is a hyperelliptic threefold.
  Using the notation of \cref{section:albanese-morphism}, we have that
  \[
  A_0 = E_0, \qquad A_1 = E_1 \times E_2, \qquad K_0 = \left\langle \left(\frac1m\right)\right\rangle \subseteq A_0, \qquad K_1 = \langle (0,t) \rangle \subseteq A_1.
  \]
  Thus the subgroup $H \subseteq G$ defined in \cref{lemma:subgroup} is generated by $g_1$.
  By our description of the Albanese fibers of $X$ given in \cref{proposition:fibers},
  we see that these are given by
  \begin{equation}
    (E_1 \times E_2)/H, \qquad \text{where} \qquad H = \langle g_1 \rangle.
  \end{equation}
  Here, $g_1$ acts on $E_1 \times E_2$ by $(z_1, z_2) \mapsto (\zeta_m z_1, ~ z_2 + t)$, so that the Albanese fibers are bielliptic surfaces with holonomy group $\mathbb{Z}/m\mathbb{Z}$.
\end{example}

\begin{remark}
  A natural question is if the construction given in \cref{example:hyperelliptic-fibers} will work for $m \in \{4,6\}$, too.
  The answer is no:
  \begin{itemize}
    \item For $m = 4$, the only choice for $t$ is $t = (1+i)/2$, which has order $2$ (see for instance \cite[Proposition~2.3]{MR4809689}).
    Thus the analogous action of $G$ on $A$ will not be free.
    \item For $m = 6$, there is no non-zero fixed point of $z \mapsto -\zeta_3 z$ (see loc. cit.).
  \end{itemize}
\end{remark}

In fact, the following no-go result holds.

\begin{proposition}
  \label{proposition:not-all-bielliptic-occur}
  Not all bielliptic surfaces occur as Albanese fibers of hyperelliptic threefolds.
\end{proposition}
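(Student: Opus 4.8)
The plan is to isolate a simple necessary condition for a bielliptic surface to occur as an Albanese fiber, and then exhibit entire families violating it. Suppose $F$ is the Albanese fiber of a hyperelliptic threefold $X = A/G$. Since $\dim F = 2$, the fiber bundle structure of \cref{lemma:fiber-bundle-structure} forces $q_X = \dim X - \dim F = 1$, so $\dim V_0 = 1$ and $A_1$ is an abelian surface. By \cref{theorem:albanese-morphism} we may write $F = A_1/H$ with $H = \langle h\rangle$ the holonomy group of $F$, cyclic of order $m \in \{2,3,4,6\}$ by \cref{proposition:fibers}. As $F$ is bielliptic, the eigenvalues of $\rho_X(h)|_{V_1}$ are $1$ and a primitive $m$-th root of unity $\zeta_m$; write $V_1 = L_1 \oplus L_2$ for the corresponding eigenlines, so that $L_1 = V_1^H$ is the Albanese direction of $F$ and $h$ acts on the elliptic curve $E_a \colonequals L_1/(\Lambda_1 \cap L_1)$ by translation by a point $\delta$. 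Freeness of the $H$-action, inherited from the free $G$-action on $A$, forces $\delta$ to have order exactly $m$.

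First I would pin down the $G$-action on $L_1$. Since $H$ is the kernel of the homomorphism $G \to A_0/K_0$, $g \mapsto t_g^{(0)} + K_0$ coming from \cref{lemma:action}, it is normal in $G$; hence $G$ preserves the $H$-fixed line $L_1 = V_1^H$ and acts on it through a character $\omega \colon G \to \mathbb{C}^\times$ with $\omega|_H = 1$. Because $q_X = 1$ we have $V_1^G = 0$, and since any $G$-fixed vector of $V_1$ would lie in $V_1^H = L_1$, the character $\omega$ must be nontrivial: there is $g \in G$ with $\omega(g) \neq 1$.

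The crux is then the commutator $[g,h]$. A short conjugation computation, using that $g$ normalises $H$ and has finite order, shows that $g$ is simultaneously diagonalisable with $h$ in the basis $(L_1,L_2)$; in particular the linear parts $\rho_X(g)$ and $\rho_X(h)$ commute, so $[g,h]$ has trivial linear part and is therefore a translation of $A$. By the standing no-translations assumption (\cref{remark:no-translations}) this forces $[g,h] = \mathrm{id}$. Restricting to $L_1$, where $h$ translates by $\delta$ and $g$ acts by $z \mapsto \omega(g)z + \varepsilon$, the commutator acts by $z \mapsto z + (\omega(g)-1)\delta$, so we conclude $(\omega(g)-1)\delta = 0$ in $E_a$; that is, the nontrivial automorphism $\omega(g)$ of $E_a$ fixes the point $\delta$ of order $m$.

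Finally I would rule this out for $m \in \{4,6\}$. A nontrivial automorphism of an elliptic curve is multiplication by a primitive $r$-th root of unity $\omega$ with $r \in \{2,3,4,6\}$, and its fixed points form the subgroup $\ker(\omega-1)$ of order $|\omega-1|^2$, which equals $4,3,2,1$ for $r = 2,3,4,6$ respectively. Hence a nonzero fixed point of a nontrivial automorphism has order $2$ (when $r \in \{2,4\}$) or $3$ (when $r = 3$), and never order $4$ or $6$. Therefore no bielliptic surface with holonomy $\mathbb{Z}/4\mathbb{Z}$ or $\mathbb{Z}/6\mathbb{Z}$, i.e.\ the families with $\#G \in \{4,6\}$ in \cref{table:bielliptic-surfaces}, can arise as an Albanese fiber, which proves the proposition. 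The same criterion shows that a $\mathbb{Z}/3\mathbb{Z}$-family member occurs only when its Albanese base is $E_{\zeta_3}$, which is exactly the situation realised in \cref{example:hyperelliptic-fibers}. I expect the main obstacle to be the commutator step: one must argue carefully that the linear twist $g$ needed to kill $V_1^G$ cannot coexist with the order-$m$ translation of $h$ without producing a forbidden translation, which is precisely where the no-translations hypothesis is indispensable.
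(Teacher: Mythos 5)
Your route is genuinely different from the paper's: you try to exclude \emph{all} bielliptic surfaces with holonomy $\mathbb{Z}/4\mathbb{Z}$ or $\mathbb{Z}/6\mathbb{Z}$ as Albanese fibers, whereas the paper excludes the general member of the $\mathbb{Z}/3\mathbb{Z}$-family $S_\tau$ using the Uchida--Yoshihara classification of the possible groups, Lange's tables (forcing CM factors in most cases), and explicit computations with the gluing subgroup $K$ in the remaining ones. Several of your intermediate steps are correct and worth recording: $H$ is normal in $G$ as the kernel of $g\mapsto t_g^{(0)}+K_0$; hence $G$ preserves $L_1=V_1^H$ and acts there through a character $\omega$, nontrivial because $V_1^G=0$; comparing eigenvalues of $\rho_X(g)\rho_X(h)\rho_X(g)^{-1}=\rho_X(h)^k$ forces $k=1$, so $g$ and $h$ commute; and your count of the orders of fixed points of elliptic-curve automorphisms agrees with \cite[Proposition~2.3]{MR4809689}.

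The gap is in the commutator step, and as written it is fatal. From $[g,h]=\mathrm{id}_A$ you may only conclude that the lifted commutator on $V$ is translation by some $\lambda\in\Lambda$; projecting to $L_1$ gives $(\omega(g)-1)\delta=p_{L_1}(\lambda)$ modulo $\Lambda\cap L_1$, i.e.\ $(\omega(g)-1)\delta$ lies in the finite subgroup $p_{L_1}(\Lambda)/(\Lambda\cap L_1)$ of $E_a$, \emph{not} $(\omega(g)-1)\delta=0$. That subgroup is nontrivial precisely when the gluing data ($K$ between $A_0$ and $A_1$, and the analogous kernel between $E_a$ and $E_b$ inside $A_1$) are nontrivial, which is the generic situation; the same issue means freeness only pins down the order of $\delta$ modulo the projected gluing subgroup, not in $E_a$ itself. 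So the final fixed-point-order argument does not close. The paper's own proof shows this is not a technicality: in the $G\cong\mathbb{Z}/6\mathbb{Z}$ case the analogous relation reads $(0,2x_2,(1\mp\zeta_3)x_3)\in K$ rather than $2x_2=0$, and one must go on to exhibit a fixed point of $g^4$; in the $(2,6)$ case the structural input \eqref{equation:unit-vector} is needed before any coordinate can be declared zero. To salvage your approach you would have to control $p_{L_1}(\Lambda)/(\Lambda\cap L_1)$ case by case, which lands you back in an analysis of essentially the same complexity as the paper's --- and it is then no longer clear that the $\mathbb{Z}/4\mathbb{Z}$ and $\mathbb{Z}/6\mathbb{Z}$ families are in fact always excluded.
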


\begin{proof}
  Consider the $1$-dimensional family of bielliptic surfaces $S_\tau = T_\tau/G'$,
  whose holonomy group is~$G' \cong \mathbb{Z}/3\mathbb{Z}$,
  defined by
  \begin{equation}
    T_\tau = E_\tau \times E_{\zeta_3}, \qquad \text{where} \quad E_\tau = \mathbb{C}/(\mathbb{Z} + \tau \mathbb{Z}) \text{ for } \tau \in \mathbb{H},\quad E_{\zeta_3} = \mathbb{C}/(\mathbb{Z} + \zeta_3 \mathbb{Z}),
  \end{equation}
  where $\mathbb{H}$ denotes the upper half-plane.
  We will show that $S_\tau$ does not occur as the Albanese fiber of any hyperelliptic threefold for general $\tau$.

  Let $X = A/G$ be a hyperelliptic threefold.
  According to the work of Uchida--Yoshihara \cite{MR400271}, the group $G$ falls into one of three categories:
  \begin{enumerate}[label=(\roman*)]
    \item\label{item:threefold-group-1} $G$ is cyclic of order $d \in \{ 2, 3, 4, 5, 6, 8, 10, 12\}$,
    \item\label{item:threefold-group-2} $G$ is an abelian group isomorphic to a group of the form $\mathbb{Z}/d_1 \mathbb{Z} \times \mathbb{Z}/d_2 \mathbb{Z}$ for some
    \begin{equation}
      (d_1, d_2) \in \{ (2,2), (2,4), (2,6), (2,12), (3,3), (3,6), (4,4), (6,6) \} \text{, or}
    \end{equation}
    \item\label{item:threefold-group-3} $G \cong \mathrm{D}_{4}$ is isomorphic to the dihedral group of order $8$.
  \end{enumerate}
  We will investigate these cases separately.
  By \cref{proposition:cyclic-implies-abelian-fibers}, the Albanese fibers of $X$ are elliptic curves or abelian surfaces
  if $X$ is of type \ref{item:threefold-group-1} and~$d$ is a prime.
  Furthermore, since the holonomy group of $S_\tau$ is cyclic of order $3$, one has $3 ~ | ~ d$, if $S_\tau$ is the Albanese fiber of $X$ of type \ref{item:threefold-group-1}.
  Thus we are left with the cases $d \in \{6, 12\}$.
  Using the notation from \cref{section:albanese-morphism}, wee see that $d = 12$ is easily excluded,
  since~$A_1$ is isogenous to a product of CM elliptic curves in this case, see \cite[Table 2, p.~509]{MR1862215}.
  However, for general $\tau \in \mathbb{H}$, the torus $T_\tau$ is not a product of CM elliptic curves.
  Hence a general~$S_\tau$ cannot occur as an Albanese fiber of $X$ if $d = 12$.
  The remaining case, $d = 6$, is divided into several subcases in the cited table.
  There are only two of these we have to consider, since~$A_1$ is again isogenous to a product of CM elliptic curves in all other cases.
  In our remaining cases, denoting by $g$ a generator of $G \cong \mathbb{Z}/6\mathbb{Z}$, we may assume that
  \begin{equation}
    \tilde{\rho}_X(g) = \operatorname{diag}(1, -1, \pm \zeta_3).
  \end{equation}
  Here,  $A_0 = E_{\tau'}$ ($\tau' \in \mathbb{H}$) is an elliptic curve, $A$ is isogenous to~$A_0 \times A_1$, and $A_1$ is isogenous to $E_\tau \times E_{\zeta_3}$ for some $\tau \in \mathbb{H}$. We may therefore write
  \begin{equation}
    A = (E_{\tau'} \times E_\tau \times E_{\zeta_3})/K,
  \end{equation}
   and after changing the origin of $ E_\tau \times E_{\zeta_3}$, we may assume that $g$ acts on $A$ via
  \begin{equation}
    g((z_1,z_2,z_3)+K) = (z_1 + t, -z_2, \pm \zeta_3 z_3)+K.
  \end{equation}
  Since $g^m$ acts freely for every $1 \leq m < 6$, the order of $t$ in~$E_1$ is equal to $6$.
  If~$S_\tau$ occurred as the Albanese fiber of $X$, then the subgroup $H \subseteq G$ of \cref{lemma:subgroup} necessarily has order $3$.
  In this case, $H$ is generated by
  \begin{equation}
    g^2((z_1,z_2,z_3)+K) = (z_1 + 2t, z_2, \zeta_3^2 z_3)+K.
  \end{equation}
  By definition of $H$, there exists an element $(2t, x_2, x_3) \in K$, so that the above action of $g^2$ can also be written as
  \begin{equation}
    g^2((z_1,z_2,z_3)+K) = (z_1, z_2 - x_2, \zeta_3^2 z_3 - x_3)+K.
  \end{equation}
  Abusing notation slightly, we view $g$ as an automorphism of $E_{\tau'} \times E_\tau \times E_{\zeta_3}$ whose linear part $\tilde{\rho}_X(g)$ maps $K$ to $K$.
  Then
  \begin{equation}
    (\operatorname{id} - \tilde{\rho}_X(g))(2t, x_2, x_3) = (0, 2x_2, (1\mp\zeta_3)x_3) \in K.
  \end{equation}
  It follows that
  \begin{equation}
    g^4((z_1,z_2,z_3)+K) = (z_1, z_2 - 2x_2, \zeta_3 z_3 - \zeta_3^2 x_3 - x_3)+K = (z_1, z_2, \zeta_3 z_3 - \zeta_3^2 x_3 \mp \zeta_3 x_3)+K
  \end{equation}
  has a fixed point on $A$, a contradiction.
  Thus $S_\tau$ cannot occur as the Albanese fiber of threefolds of type~\ref{item:threefold-group-1}.

  For hyperelliptic varieties of type \ref{item:threefold-group-3},
  it was shown in \cite[Theorem 0.1]{MR4186481} that the complex representation
  of hyperelliptic threefolds of type \ref{item:threefold-group-3} does not contain the trivial representation,
  thus they have irregularity $0$ (see \cref{remark:irregularity}).
  We are thus left with type \ref{item:threefold-group-2}.

  Let $X = A/G$ be a hyperelliptic threefold of type \ref{item:threefold-group-2}.
  Using the notation from \cref{section:albanese-morphism},
  we know that the Albanese fibers of $X$ are quotients of $A_1$ by a subgroup~$H \subseteq G$.
  Thus, in order for $S_\tau$ to occur as the Albanese fiber of $X$, we must have $3 \mid d_2$,
  and hence
  \begin{equation}
    (d_1,d_2) \in \{(2,6), (2, 12), (3,3), (3,6), (6,6)\}.
  \end{equation}
  According to \cite[Table 3, p.~509]{MR1862215}, the abelian variety $A_1$ is isogenous to a product of elliptic curves.
  Inspecting the table, one sees that if $(d_1,d_2) \neq (2,6)$, then $A_1$ is isogenous to a product of CM elliptic curves and we conclude as in case \ref{item:threefold-group-1} above.

  Consequently, it suffices to show that no $S_{\tau}$ can occur as an Albanese fiber of $X$ if $(d_1, d_2) = (2,6)$.
  Inspecting the cited table again, we see that the case $(d_1, d_2) = (2,6)$ is divided into four subcases.
  Only the first of these has to excluded, since $A_1$ is again isogenous to a product of CM elliptic curves in the other three subcases.
  As above, we may write
  \begin{equation}
    A = (E_{\tau'} \times E_\tau \times E_{\zeta_3})/K.
  \end{equation}
  Denote by $g_1$, $g_2$ generators of $G \cong \mathbb{Z}/2\mathbb{Z} \times \mathbb{Z}/6\mathbb{Z}$ of respective orders $2$ and $6$.
  According to the cited table, we may assume that
  \begin{equation}
    \tilde{\rho}_X(g_1) = \operatorname{diag}(1,-1,1), \qquad \tilde{\rho}_X(g_2) = \operatorname{diag}(1,1,\zeta_6).
  \end{equation}
  Thus, the action of $G$ on $A$ has the form
  \begin{equation}
    g_1((z_1, z_2, z_3)+K) = \left(z_1 + s', -z_2, z_3 + s\right)+K, \qquad
    g_2((z_1, z_2, z_3)+K) = \left(z_1 + t', z_2 + t, \zeta_6 z_3 \right)+K,
  \end{equation}
  where $s',t' \in E_{\tau'}$, $s \in E_{\zeta_3}$, and $t \in E_\tau$.

  We will now use the important property that each of the three elliptic curves $E_{\tau'}$, $E_\tau$, and $E_{\zeta_3}$ are abelian subvarieties of $A$,
  i.e., the composed maps
  $E_{\tau'} \hookrightarrow E_{\tau'} \times E_\tau \times E_{\zeta_3} \to A$ (and similarly for $E_\tau$ and $E_{\zeta_3}$) are injective.
  This implies that
  \begin{equation}
    \label{equation:unit-vector}
    \text{if } (x_1,x_2,x_3) \in K \text{ with } x_{i} = x_{j} = 0 \text{ for some } i \neq j, \text{ then } (x_1,x_2,x_3) = 0.
  \end{equation}
  Abusing notation slightly again, we may view $g_1$ and $g_2$ as elements of $\Bihol(E_{\tau'} \times E_\tau \times E_{\zeta_3})$ such that~$\rho_X(g_1)$ and~$\rho_X(g_2)$ map $K$ to $K$, respectively.
  We then obtain for every $(x_1,x_2,x_3) \in K$, that
  \begin{equation}
    \begin{aligned}
      (\operatorname{id} - \rho_X(g_1))(x_1,x_2,x_3) &= (0, 2x_2, 0) \in K, \\
      (\operatorname{id} - \rho_X(g_2))(x_1,x_2,x_3) &= (0, 0, (1 - \zeta_6)x_3) \in K.
    \end{aligned}
  \end{equation}
  By (\ref{equation:unit-vector}), $x_2 \in E_\tau[2]$ and $(1-\zeta_6) x_3 = 0$ in $E_{\zeta_3}$.
  By \cite[Proposition 2.3]{MR4809689}, this implies that $x_3 = 0$.
  Hence,
  \begin{equation}
    2 \cdot (x_1,x_2,x_3) = (2x_1, 2x_2, 2x_3) = (2x_1,0,0) \in K
  \end{equation}
  implies that $2x_1 = 0$ in $E_{\tau'}$, i.e., we have shown that
  \begin{equation}
    K \subseteq E_{\tau'}[2] \times E_\tau[2] \times \{0\}.
  \end{equation}
  In particular, the image $K_0$ of the projection $K \to E_{\tau'}$ is contained in $E_{\tau'}[2]$.
  Now,
  \begin{equation}
    g_1^2((z_1,z_2,z_3)+K) = (z_1 + 2s', z_2, z_3 + 2s)+K.
  \end{equation}
  Viewing $g_1^2$ again as an automorphism of $E_{\tau'} \times E_\tau \times E_{\zeta_3}$, we obtain that $(2s', 0, 2s) \in K$,
  i.e., $2s = 0$ and hence $2s' = 0$ by (\ref{equation:unit-vector}).
  Furthermore,
  \begin{equation}
    \begin{aligned}
      (g_1 \circ g_2)((z_1,z_2,z_3)+K) &= \left(z_1 + s' + t', -z_2 - t, \zeta_6 z_3 + s\right)+K, \\
      (g_2 \circ g_1)((z_1,z_2,z_3)+K) &= \left(z_1 + s' + t', -z_2 + t, \zeta_6 z_3 + \zeta_6 s\right)+K.
    \end{aligned}
  \end{equation}
  Since $g_1 \circ g_2 = g_2 \circ g_1$, we obtain that $(0, 2t, (\zeta_6 - 1)s) \in K$.
  Hence $s = 0$ and thus $2t = 0$, too.
  As above, the condition that $g_2$ has order $6$ implies that $(6t', 2t, 0) = (6t',  0,   0) \in K$,
  i.e., $6t' = 0$.
  Clearly, $2t' \neq 0$, because $g_2^2((z_1,z_2,z_3)+K) = (z_1 + 2t', z_2, \zeta_3 z_3)+K$ and $g_{2}^{2}$ acts freely.

  This means that the subgroup $H$ of $G$ containing all elements of $G$
  which act on $E_{\tau'}$ by a translation in~$K_0$ consists entirely
  of elements of order at most $2$ (in fact, $H \subseteq \langle g_2^3 \rangle$).
  It follows that the Albanese fibers~$A_1/H$ of $X$ do not have $\mathbb{Z}/3\mathbb{Z}$-holonomy.
  Thus no $S_\tau$ can occur as the Albanese fiber of $X$ in the last remaining case $(d_1,d_2) = (2,6)$.
\end{proof}

\section{Indecomposability of the derived category}
\label{section:indecomposability}
As an illustration of the applicability of \cref{theorem:fiber-albanese}
we will consider the derived category~$\derived^\bounded(X)$ of a hyperelliptic variety.
For a survey of the role of semiorthogonal decompositions of derived categories
in algebraic geometry
one is referred to Kuznetsov's ICM surveys \cite{MR4680279,MR3728631}.
As explained in the introduction,
we conjecture that for hyperelliptic varieties
there are \emph{no} such decompositions.

We will discuss two approaches to verify this in many cases.

\paragraph{Stable indecomposability}
The following notion is (the version over~$\mathbb{C}$ of) \cite[Definition~1.3]{MR4582884}.
\begin{definition}
  \label{definition:stably-indecomposable}
  Let~$X$ be a scheme over~$\mathbb{C}$.
  We say it is \emph{noncommutatively stably semiorthogonally indecomposable}
  (or \emph{stably indecomposable} for short)
  if
  \begin{itemize}
    \item for every proper~$\Perf(X)$\dash linear category~$\mathcal{D}$
      which admits a classical generator,
    \item and every left admissible subcategory~$\mathcal{A}\subseteq\mathcal{D}$,
  \end{itemize}
  we have that~$\mathcal{A}$ is closed under the action of~$\Perf(X)$.
\end{definition}
Remark that this is not a property of~$\Perf(X)$,
rather it is a property of the tensor triangulated category~$(\Perf(X),-\otimes^{\mathbf{L}}-)$,
which recovers~$X$ itself by Balmer reconstruction.
In all our examples, $X$ will be smooth and projective,
so that~$\Perf(X)=\derived^\bounded(X)$.

By \cite[Lemma~2.13]{MR4582884} we have that~$X$ being stably indecomposable
implies that~$\derived^\bounded(X)$ is indecomposable.
It is however quite a bit stronger:
if~$Z\subset X$ is a smooth connected closed subvariety
of a stably indecomposable variety~$X$,
then~$\derived^\bounded(Z)$ is indecomposable.
Thus, while a K3~surface has an indecomposable derived category,
if there is a smooth rational curve on it,
it cannot be stably indecomposable \cite[Example~2.16]{MR4582884}.

The following is \cite[Theorem~1.5]{MR4582884},
and it is the main result that we will use for hyperelliptic varieties.
\begin{theorem}[Pirozhkov]
  \label{theorem:pirozhkov}
  Let~$f\colon X\to Y$ be a flat morphism between smooth and proper varieties.
  If
  \begin{itemize}
    \item~$Y$ is stably indecomposable, and
    \item for all closed points~$y\in Y$ the fiber~$X_y$ is stably indecomposable,
  \end{itemize}
  then~$X$ is stably indecomposable.
\end{theorem}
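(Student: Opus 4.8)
The plan is to exploit the two hypotheses at two different scales: the stable indecomposability of~$Y$ will control the action of~$\Perf(Y)$ pulled back to~$X$, while the fiberwise hypothesis will control the remaining ``vertical'' directions of the~$\Perf(X)$\dash action. So let~$\mathcal{D}$ be a proper~$\Perf(X)$\dash linear category admitting a classical generator, and let~$\mathcal{A}\subseteq\mathcal{D}$ be left admissible, with complementary piece~$\mathcal{B}$ in the resulting semiorthogonal decomposition. Pulling back along~$f^*\colon\Perf(Y)\to\Perf(X)$ makes~$\mathcal{D}$ into a~$\Perf(Y)$\dash linear category; it remains proper (properness over~$Y$ follows since~$f$ is proper) and still admits a classical generator. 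Because~$Y$ is stably indecomposable, the subcategory~$\mathcal{A}$ is closed under the~$\Perf(Y)$\dash action, so the semiorthogonal decomposition~$\langle\mathcal{A},\mathcal{B}\rangle$ and its projection functors are~$\Perf(Y)$\dash linear. It then remains to upgrade this to closure under the full~$\Perf(X)$\dash action.

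Next I would pass to fibers. For a closed point~$y\in Y$, flatness of~$f$ gives a base-change identification~$\Perf(X)\otimes_{\Perf(Y)}\Perf(y)\simeq\Perf(X_y)$, so tensoring~$\mathcal{D}$ over~$\Perf(Y)$ with~$\Perf(y)$ yields a proper~$\Perf(X_y)$\dash linear category~$\mathcal{D}_y$ with a classical generator. Since the~$\Perf(Y)$\dash linear semiorthogonal decomposition induced by~$\mathcal{A}$ is compatible with this flat base change, it restricts to a semiorthogonal decomposition of~$\mathcal{D}_y$ with left admissible piece~$\mathcal{A}_y$. Invoking the stable indecomposability of the fiber~$X_y$, the subcategory~$\mathcal{A}_y$ is closed under the~$\Perf(X_y)$\dash action, i.e.\ the base-changed projection functors are~$\Perf(X_y)$\dash linear, for every closed point~$y$.

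Finally I would glue these fiberwise statements into a global one. Closure of~$\mathcal{A}$ under the~$\Perf(X)$\dash action amounts to the vanishing of the projection onto~$\mathcal{B}$ of~$E\cdot a$, for every~$a\in\mathcal{A}$ and every~$E\in\Perf(X)$. The obstruction to this vanishing is measured by mapping objects in~$\mathcal{D}$ which, thanks to the~$\Perf(Y)$\dash linear structure from the first step, organize into coherent sheaves on~$Y$ via the relative~$\mathrm{Hom}$ and pushforward along~$f$. Flatness of~$f$ together with properness ensures that these relative Hom sheaves commute with base change to each closed point~$y$, where they compute precisely the fiberwise obstructions shown to vanish in the second step. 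A coherent sheaf on~$Y$ whose fiber at every closed point is zero must itself be zero, so the obstruction vanishes globally and~$\mathcal{A}$ is closed under the~$\Perf(X)$\dash action. This is exactly the stable indecomposability of~$X$.

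I expect the main obstacle to be this last gluing step, and more specifically the base-change bookkeeping that underlies it: one must verify that, in the~$\Perf(Y)$\dash linear setting, the semiorthogonal decomposition, its projection functors, and the relevant relative~$\mathrm{Hom}$ objects all commute with the derived base change~$-\otimes_{\Perf(Y)}\Perf(y)$ along closed points. This is where flatness of~$f$ (for base change to hold on the nose) and properness (so the relative Homs are coherent, hence perfect over the smooth~$Y$) are genuinely used. Once these compatibilities are in place, the descent itself is a clean Nakayama\dash type ``fiberwise vanishing implies vanishing'' argument; the real work lies in setting up the categorical base change correctly.
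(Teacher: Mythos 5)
This statement is not proven in the paper at all: it is quoted verbatim as Pirozhkov's Theorem~1.5 from \cite{MR4582884} and used as a black box, so there is no in-paper proof to compare against. Judged against Pirozhkov's actual argument, your sketch reconstructs its architecture correctly: first use $f^*\colon\Perf(Y)\to\Perf(X)$ to regard~$\mathcal{D}$ as a proper~$\Perf(Y)$\dash linear category with a classical generator and apply the indecomposability of~$Y$ to make the semiorthogonal decomposition~$\langle\mathcal{A},\mathcal{B}\rangle$ and its projection functors~$\Perf(Y)$\dash linear; then base change along~$\Perf(Y)\to\Perf(\{y\})$, using flatness for the identification~$\Perf(X)\otimes_{\Perf(Y)}\Perf(\{y\})\simeq\Perf(X_y)$ and Kuznetsov-style base change of linear semiorthogonal decompositions, to kill the fiberwise obstructions; finally conclude from the fact that an object of a proper~$Y$\dash linear category with a generator whose derived fibers all vanish is zero. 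That is the right strategy and the right division of labor between the two hypotheses.

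Be aware, though, that as written this is a plan rather than a proof: the three ingredients you defer to the end are exactly where the content lies. Concretely, you need (i) that properness and the existence of a classical generator survive both the restriction of scalars to~$\Perf(Y)$ and the tensor product~$\mathcal{D}\otimes_{\Perf(Y)}\Perf(\{y\})$; (ii) the base-change theorem asserting that a~$\Perf(Y)$\dash linear left admissible subcategory induces a left admissible subcategory of the fiber category with projection functors commuting with base change; and (iii) the ``fiberwise vanishing implies vanishing'' lemma, which is not literally Nakayama but a support-theoretic statement, proved by noting that~$\mathcal{RH}om_Y(G,\beta)$ is a perfect complex on the smooth proper~$Y$ whose derived fibers compute the fiberwise mapping objects. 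None of these is false, and all appear in \cite{MR4582884} (building on Perry's and Kuznetsov's formalism), but your write-up asserts rather than establishes them, so it should be read as a correct reconstruction of the known proof outline rather than a self-contained argument.
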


Abelian varieties are stably indecomposable \cite[Proposition 3.4]{MR4582884}.
From this we deduce the following two corollaries.

\begin{corollary}
  \label{corollary:cyclic}
  Let $X$ be a cyclic hyperelliptic variety.
  Then~$X$ is stably indecomposable.
\end{corollary}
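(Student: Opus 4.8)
The plan is to apply Pirozhkov's theorem (\cref{theorem:pirozhkov}) to the Albanese morphism $\alb_X \colon X \to \Alb(X)$, using the explicit description of its fibers provided by \cref{theorem:albanese-morphism}, and to set up an induction on the dimension of $X$.

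First I would recall the key structural facts for a cyclic hyperelliptic variety $X = A/G$ with $G$ cyclic. By \cref{lemma:fiber-bundle-structure} the Albanese morphism is an \'etale fiber bundle, hence in particular flat, and both $X$ and $\Alb(X)$ are smooth and proper, so Pirozhkov's hypotheses on the morphism are satisfied. The base $\Alb(X) = (A_0/K_0)/G$ is an abelian variety by \cref{lemma:candidate-albanese-variety-is-abelian}, and abelian varieties are stably indecomposable by \cite[Proposition~3.4]{MR4582884}; this discharges the first bullet in \cref{theorem:pirozhkov}. For the second bullet, I need that every Albanese fiber is stably indecomposable. By \cref{lemma:cyclic-fibers} each fiber is either an abelian variety or a lower-dimensional cyclic hyperelliptic variety. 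In the abelian case we again invoke \cite[Proposition~3.4]{MR4582884}.

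The remaining case---a lower-dimensional cyclic hyperelliptic variety---is handled by induction on $\dim X$. The base case is $\dim X = 1$, which is impossible since there are no hyperelliptic curves (a hyperelliptic manifold has $q_X < \dim X$ by \cref{lemma:irregularity}, and a cyclic one has $q_X \geq 1$ by \cref{remark:cyclic-irregularity-bound}, forcing $\dim X \geq 2$). For $\dim X \geq 2$, the fibers are cyclic hyperelliptic of strictly smaller dimension by \cref{lemma:cyclic-fibers}, so the inductive hypothesis applies to them. Since every fiber of $\alb_X$ is isomorphic to the common fiber $F$ (the morphism being a fiber bundle), all fibers are stably indecomposable once $F$ is, and Pirozhkov's theorem then yields that $X$ itself is stably indecomposable.

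The main subtlety to verify carefully is that the induction is genuinely well-founded and that the dimension actually drops: the fiber $F \cong A_1/H$ satisfies $\dim F = \dim A_1 < \dim A = \dim X$ exactly when $\dim A_0 = q_X \geq 1$, which holds by \cref{remark:cyclic-irregularity-bound}. Thus the strict dimension drop is guaranteed precisely by the nonvanishing of the irregularity for cyclic hyperelliptic varieties, and this is the one place where cyclicity (rather than just the structure of the Albanese morphism) is essential to close the argument. Finally, stable indecomposability implies indecomposability of $\derived^\bounded(X)$ by \cite[Lemma~2.13]{MR4582884}, giving the full statement.
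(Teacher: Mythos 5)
Your proposal is correct and follows essentially the same route as the paper: apply Pirozhkov's theorem to the Albanese morphism, whose base is abelian and whose fibers are abelian or lower-dimensional cyclic hyperelliptic by \cref{lemma:cyclic-fibers}, then induct on dimension. The extra care you take with the well-foundedness of the induction (the dimension drop via $q_X\geq 1$ from \cref{remark:cyclic-irregularity-bound}) is exactly what the paper delegates to \cref{lemma:cyclic-fibers}.
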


\begin{proof}
  By \Cref{lemma:cyclic-fibers},
  the fibers of the Albanese morphism of $X$ are either abelian varieties
  or lower-dimensional cyclic hyperelliptic varieties.
  Therefore, applying \Cref{theorem:pirozhkov} inductively on the dimension of $X$,
  we deduce the result.
\end{proof}

\begin{example}
  \label{example:bielliptic-surface}
  From \Cref{corollary:cyclic}
  we recover the fact that bielliptic surfaces have (stably) indecomposable derived categories.
  This was already obtained in \cite[Corollary~4.2]{MR4582884}.
\end{example}

A different application of~\cref{theorem:pirozhkov} yields the following.
\begin{corollary}
  \label{corollary:big-irregularity}
  Let~$X$ be a hyperelliptic variety of irregularity~$q_X=\dim X-2$ or~$\dim X-1$.
  Then~$X$ is stably indecomposable.
\end{corollary}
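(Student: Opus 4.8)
The plan is to apply \cref{theorem:pirozhkov} to the (flat) Albanese morphism $\alb_X \colon X \to \Alb(X)$. The first step is to record the dimension of an Albanese fiber: by \cref{lemma:factorization} we have $\dim \Alb(X) = \dim V_0 = q_X$, so the fiber $(\{a_0\}+A_1)/H$ described in \cref{theorem:albanese-morphism} has dimension $\dim A_1 = \dim X - q_X$. Under our hypotheses this is $1$ when $q_X = \dim X - 1$ and $2$ when $q_X = \dim X - 2$. The strategy is then to feed the two inputs of \cref{theorem:pirozhkov}, namely the stable indecomposability of the base and of all the fibers, and to verify flatness.

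The base $\Alb(X)$ is an abelian variety, hence stably indecomposable by \cite[Proposition 3.4]{MR4582884}. For the fibers I would invoke \cref{proposition:fibers}, which says each is abelian or hyperelliptic, and then classify them by dimension. When $q_X = \dim X - 1$ the fibers are $1$-dimensional; since \cref{lemma:irregularity} forbids $1$-dimensional hyperelliptic manifolds (a curve of Kodaira dimension $0$ is elliptic, which has $q = \dim$), they must be elliptic curves, and these are stably indecomposable by \cite[Proposition 3.4]{MR4582884}. When $q_X = \dim X - 2$ the fibers are $2$-dimensional, hence abelian surfaces or bielliptic surfaces; the former are handled by \cite[Proposition 3.4]{MR4582884}, while the latter satisfy $q = \dim - 1$, so they are cyclic by \cref{proposition:large-irregularity-is-cyclic} and therefore stably indecomposable by \cref{corollary:cyclic} (cf.\ \cref{example:bielliptic-surface}).

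It remains to check flatness of $\alb_X$, which \cref{theorem:pirozhkov} requires. This follows from \cref{lemma:fiber-bundle-structure}: an \'etale fiber bundle is even smooth as a morphism, since smoothness can be checked after the faithfully flat base change along the \'etale cover $B \to \Alb(X)$ that trivializes the bundle, where it becomes the projection $F \times B \to B$. Equivalently one can appeal to miracle flatness, as $X$ and $\Alb(X)$ are smooth and the fibers are equidimensional. With all hypotheses verified, \cref{theorem:pirozhkov} yields that $X$ is stably indecomposable in both cases.

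I do not expect a genuine obstacle here, as the argument is a direct application of \cref{theorem:pirozhkov} once the low-dimensional fibers are identified. For $q_X = \dim X - 1$ there is even a shortcut bypassing the fibration argument entirely: such an $X$ is itself cyclic by \cref{proposition:large-irregularity-is-cyclic}, so \cref{corollary:cyclic} applies directly. The only points demanding care are the classification of the $1$- and $2$-dimensional fibers (in particular ruling out a $1$-dimensional hyperelliptic fiber) and the verification that each resulting surface type is stably indecomposable; both reduce to results already established.
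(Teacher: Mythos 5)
Your argument is essentially the paper's own proof: identify the Albanese fibers as elliptic curves, abelian surfaces, or bielliptic surfaces via \cref{theorem:albanese-morphism}, note these are all stably indecomposable (the bielliptic case via \cref{example:bielliptic-surface}), and apply \cref{theorem:pirozhkov}. Your extra checks (ruling out one-dimensional hyperelliptic fibers, flatness of $\alb_X$) are correct and merely make explicit what the paper leaves implicit.
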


\begin{proof}
  By \cref{theorem:albanese-morphism} we know that under this assumption
  the fibers are
  either elliptic curves,
  abelian surfaces,
  or bielliptic surfaces.
  These are stably indecomposable by \cite[Proposition 3.4]{MR4582884}
  and \cref{example:bielliptic-surface},
  so an application of \cref{theorem:pirozhkov} yields the result.
\end{proof}

The only threefolds not covered by \cref{corollary:big-irregularity}
have~$q_X=0$,
i.e., they are regular.
The following easy observation deals with the indecomposability of their derived categories.
\begin{lemma}
  \label{lemma:regular-hyperelliptic-threefold}
  Let~$X$ be a regular hyperelliptic threefold.
  Then~$\omega_X\cong\mathcal{O}_X$,
  and thus~$\derived^\bounded(X)$ is indecomposable.
\end{lemma}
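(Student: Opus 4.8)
The plan is to prove the stronger statement that $\omega_X$ is not merely torsion but actually trivial, and then conclude indecomposability from the observation made in the introduction that a variety with trivial canonical bundle has an indecomposable derived category by Serre duality. Since $X=A/G$ is a finite \'etale quotient of an abelian variety, we already know $\mathrm{K}_X$ is torsion; the whole task is thus to upgrade ``torsion'' to ``trivial'', and this is where the hypotheses $q_X=0$ and $\dim X=3$ enter.

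The engine of the argument is the holomorphic Euler characteristic. Because $r\colon A\to X$ is finite \'etale of degree $|G|$, the Todd genus is multiplicative, so $|G|\cdot\chi(\mathcal{O}_X)=\chi(\mathcal{O}_A)=0$, the last equality because $\chi(\mathcal{O}_A)=(1-1)^{\dim A}$ vanishes for every positive-dimensional abelian variety. Expanding $\chi(\mathcal{O}_X)=\sum_{q=0}^{3}(-1)^q h^{0,q}(X)$ and using $h^{0,0}(X)=1$ together with $h^{0,1}(X)=q_X=0$, I obtain $0=1+h^{0,2}(X)-h^{0,3}(X)$, hence $h^{0,3}(X)=1+h^{0,2}(X)\geq 1$.

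On the other hand, Hodge symmetry and the identification $\HH^0(X,\Omega_X^3)=\HH^0(A,\Omega_A^3)^G$ give $h^{0,3}(X)=h^{3,0}(X)=\dim(\wedge^3 V^\vee)^G$, where $V^\vee=\HH^0(A,\Omega_A^1)$ carries the dual of the complex representation. As $\wedge^3 V^\vee$ is one-dimensional, this forces $h^{0,3}(X)\in\{0,1\}$, so in fact $h^{0,3}(X)=1$ (and as a byproduct $h^{0,2}(X)=0$). Thus $\omega_X=\Omega_X^3$ has a nonzero global section, i.e.\ $\mathrm{K}_X\sim D$ for some effective divisor $D$. Since $\mathrm{K}_X$ is torsion, say $m\mathrm{K}_X\sim 0$, we get $mD\sim 0$ with $D\geq 0$, whence $D=0$ and $\omega_X\cong\mathcal{O}_X$.

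I expect there to be no serious obstacle here: the only points requiring care are the vanishing $\chi(\mathcal{O}_X)=0$ and the bound $h^{3,0}(X)\leq 1$, both of which are formal. In fact the vanishing can also be seen directly from freeness via the holomorphic Lefschetz count $\chi(\mathcal{O}_X)=\tfrac{1}{|G|}\sum_{g\in G}\det(\mathrm{id}-\overline{\tilde{\rho}_X(g)})$, since freeness forces every $g\in G$ to have $1$ as an eigenvalue of $\tilde{\rho}_X(g)$ (otherwise $\rho_X(g)-\mathrm{id}$ is a surjective isogeny and $g$ has a fixed point), so every summand vanishes. A more laborious but equivalent route would reformulate triviality of $\omega_X$ as triviality of the determinant character $\det\tilde{\rho}_X\colon G\to\mathbb{C}^\times$ and then invoke the Uchida--Yoshihara classification already used in \cref{proposition:not-all-bielliptic-occur}: regularity excludes the cyclic case by \cref{remark:cyclic-irregularity-bound}, forces $G\cong(\mathbb{Z}/2\mathbb{Z})^2$ in the abelian case, and in the $\mathrm{D}_4$ case freeness forces the order-$4$ element to have $1$ as an eigenvalue, pinning down the one-dimensional summand so that $\det\tilde{\rho}_X$ collapses to the trivial character. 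I would nevertheless present the Euler-characteristic argument, as it is cleaner and dimension-uniform.
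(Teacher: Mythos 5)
Your proof is correct and follows essentially the same route as the paper: multiplicativity of $\chi(\mathcal{O}_X)$ under the finite étale cover $A\to X$ gives $\chi(\mathcal{O}_X)=0$, which with $q_X=0$ forces $\hh^3(X,\mathcal{O}_X)\geq 1$, and a nonzero section of the torsion line bundle $\omega_X$ trivializes it. Your explicit justification that $\hh^{3,0}(X)=\dim(\wedge^3V^\vee)^G\leq 1$ is a welcome touch the paper leaves implicit, but the argument is the same.
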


\begin{proof}
  The variety~$X$ is a finite \'etale quotient of an abelian variety $A$, so that
  \begin{equation}
    0 = \chi(A,\mathcal{O}_{A}) = d\chi(X,\mathcal{O}_{X})
  \end{equation}
  for some $d > 0$.
  In particular,
  \begin{equation}
    \chi(X,\mathcal{O}_{X}) = 1 - 0 + \hh^2(X,\mathcal{O}_{X}) - \hh^3(X,\mathcal{O}_{X}) = 0.
  \end{equation}
  Therefore, $\hh^3(X,\mathcal{O}_{X}) = 1$.
  This implies that $\omega_{X}$ admits a non-zero global section.
  But since $X$ is hyperelliptic, $\omega_{X}$ is a torsion line bundle.
  Hence $\omega_{X} \cong \mathcal{O}_{X}$ and $\derived^\bounded(X)$ is indecomposable by Serre duality.
\end{proof}

\begin{example}
  \label{example:D4-threefold}
  An example of a hyperelliptic threefold
  satisfying the condition of \cref{lemma:regular-hyperelliptic-threefold}
  is given in \cite{MR4186481}.
  Its Hodge diamond is
  \begin{equation}
    \begin{array}{ccccccc}
        &   &   & 1 \\
        &   & 0 &      & 0 \\
        & 0 &   & 2    &      & 0 \\
      1 &   & 2 &      & 2    &      & 1 \\
        & 0 &   & 2    &      & 0 \\
        &   & 0 &      & 0 \\
        &   &   & 1 \\
    \end{array}.
  \end{equation}
\end{example}

\begin{example}
  \label{example:Z2xZ2-threefold}
  Another example of a hyperelliptic threefold
  satisfying the condition of \cref{lemma:regular-hyperelliptic-threefold},
  which is moreover \emph{abelian},
  is the following.
  Let~$G=\mathbb{Z}/2\mathbb{Z}\times\mathbb{Z}/2\mathbb{Z}$
  act on the product~$E_1\times E_2\times E_3$ of elliptic curves,
  by
  \begin{equation}
    \begin{aligned}
      g_1(z_1,z_2,z_3)&=\left(-z_1,-z_2+\frac{1}{2},z_3+\frac{1}{2}\right) \\
      g_2(z_1,z_2,z_3)&=\left(z_1+\frac{1}{2},-z_2,-z_3\right)
    \end{aligned}
  \end{equation}
  where we write~$E_i$ as the quotient of~$\mathbb{C}$
  by a lattice of the form~$\mathbb{Z}+\tau\mathbb{Z}$.
  Its Hodge diamond is
  \begin{equation}
    \begin{array}{ccccccc}
        &   &   & 1 \\
        &   & 0 &      & 0 \\
        & 0 &   & 3    &      & 0 \\
      1 &   & 3 &      & 3    &      & 1 \\
        & 0 &   & 3    &      & 0 \\
        &   & 0 &      & 0 \\
        &   &   & 1 \\
    \end{array}.
  \end{equation}
\end{example}

We do not know whether a regular hyperelliptic threefold,
such as the ones in \cref{example:D4-threefold,example:Z2xZ2-threefold}, is stably indecomposable.

\begin{remark}
  \label{remark:no-exceptional-objects}
  The argument in the proof of \Cref{lemma:regular-hyperelliptic-threefold} shows that
  line bundles are not exceptional objects in the derived category of a hyperelliptic variety,
  providing some further evidence for \Cref{conjecture:indecomposability}, albeit a small one.
  This generalizes to higher dimensions one of the key differences between derived categories
  of bielliptic surfaces and Enriques surfaces.
\end{remark}

\paragraph{Algebraic triviality of the canonical bundle}
The first proof of indecomposability for bielliptic surfaces used the following criterion \cite[Corollary~1.7]{1508.00682v2}.
\begin{theorem}[Kawatani--Okawa]
  \label{theorem:kawatani-okawa}
  Let~$X$ be a smooth projective variety
  for which~$[\omega_X]\in\Pic^0(X)$.
  Then~$\derived^\bounded(X)$ is indecomposable.
\end{theorem}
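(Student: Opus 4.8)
The plan is to argue by contradiction, exploiting that the hypothesis $[\omega_X]\in\Pic^0(X)$ forces the Serre functor to differ from a shift only by a \emph{numerically trivial} twist, and that such twists cannot detect a semiorthogonal decomposition. Suppose therefore that $\derived^\bounded(X)=\langle\mathcal{A},\mathcal{B}\rangle$ is a nontrivial semiorthogonal decomposition, so that $\operatorname{Hom}(\mathcal{B},\mathcal{A})=0$ with $\mathcal{A},\mathcal{B}$ both admissible and nonzero. I will show this is impossible.

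First I would show that the autoequivalence $-\otimes\omega_X$ preserves each of the components $\mathcal{A}$ and $\mathcal{B}$. Since $[\omega_X]\in\Pic^0(X)$ and $\Pic^0(X)$ is a connected abelian variety, the twist $-\otimes\omega_X$ belongs to the connected family $\{-\otimes L\}_{L\in\Pic^0(X)}$ of autoequivalences containing the identity. The point is that admissible subcategories are rigid: the projection functor onto $\mathcal{A}$ is a Fourier--Mukai functor whose kernel is an idempotent in the convolution monoid on $\derived^\bounded(X\times X)$, and such idempotents do not deform in connected families. Hence a connected path of autoequivalences starting at the identity must fix each component setwise, giving in particular $\mathcal{A}\otimes\omega_X=\mathcal{A}$.

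Next I would feed this into the Bondal--Kapranov principle that the Serre functor rotates semiorthogonal decompositions: from $\langle\mathcal{A},\mathcal{B}\rangle$ one obtains the decomposition $\langle\mathcal{B},\mathbb{S}_X\mathcal{A}\rangle$, where $\mathbb{S}_X=(-\otimes\omega_X)[\dim X]$. By the previous step $\mathbb{S}_X\mathcal{A}=\mathcal{A}$, so $\langle\mathcal{B},\mathcal{A}\rangle$ is \emph{also} a semiorthogonal decomposition, yielding $\operatorname{Hom}(\mathcal{A},\mathcal{B})=0$. Together with $\operatorname{Hom}(\mathcal{B},\mathcal{A})=0$ this says $\mathcal{A}$ and $\mathcal{B}$ are \emph{completely} orthogonal, so $\derived^\bounded(X)\cong\mathcal{A}\times\mathcal{B}$ as triangulated categories. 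A product decomposition induces a ring decomposition $\mathrm{HH}^0(X)\cong\mathrm{HH}^0(\mathcal{A})\times\mathrm{HH}^0(\mathcal{B})$ of degree-zero Hochschild cohomology, and $\mathrm{HH}^0(X)=\HH^0(X,\mathcal{O}_X)=\mathbb{C}$ because $X$ is connected. As $\mathbb{C}$ has no nontrivial idempotents, one factor is the zero ring, forcing $\mathcal{A}=0$ or $\mathcal{B}=0$ and contradicting nontriviality.

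The hard part will be the rigidity assertion in the second paragraph, namely that the connected group of $\Pic^0(X)$-twists preserves the components of a fixed semiorthogonal decomposition. Morally this is the statement that the set of semiorthogonal decompositions is discrete, but turning it into a rigorous claim requires either a deformation-theoretic argument showing that the idempotent kernel of the projection functor is infinitesimally rigid, or an explicit computation comparing $\mathcal{A}$ with $\mathcal{A}\otimes L$ for $L$ close to $\mathcal{O}_X$. Everything downstream of this step — the Serre rotation and the Hochschild cohomology computation — is formal, so I expect this invariance to be the sole genuine obstacle.
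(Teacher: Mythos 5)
The paper does not prove this statement at all: it is imported verbatim from Kawatani--Okawa \cite[Corollary~1.7]{1508.00682v2}, so your proposal has to be measured against their argument. Your overall architecture (invariance of the components under the Serre functor $\Rightarrow$ complete orthogonality by Serre-duality rotation $\Rightarrow$ contradiction with connectedness via the absence of nontrivial idempotents in $\HH^0(X,\mathcal{O}_X)=\mathbb{C}$) is sound, and the downstream steps are indeed formal. But the gap is exactly where you locate it, and it is a genuine one: the assertion that the connected family of autoequivalences $\{-\otimes L\}_{L\in\Pic^0(X)}$ preserves each component of a semiorthogonal decomposition. ``Idempotents in the convolution monoid do not deform in connected families'' is not an argument --- the convolution monoid is not an algebraic variety, the kernel of the projection functor does vary in an honest flat family over $\Pic^0(X)$, and to conclude that the component is locally constant one has to prove that semiorthogonal decompositions are rigid, i.e., that the relevant moduli space of decompositions is \'etale over the base. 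That is a real theorem (it is the main content of the work of Belmans--Okawa--Ricolfi on moduli spaces of semiorthogonal decompositions in families, one of whose corollaries is precisely that $\Aut^0(X)\ltimes\Pic^0(X)$ preserves every admissible subcategory), and its proof --- deformation theory of the projection kernels and vanishing of the relevant $\operatorname{Ext}$ groups --- is at least as hard as the statement you are trying to establish. As written, your proof therefore rests its entire weight on an unproven step; it becomes correct only by citing that rigidity theorem, at which point it is a genuinely different and considerably less elementary route than the one the paper relies on.

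For contrast, the cited Kawatani--Okawa argument avoids rigidity entirely and is local in nature. Given a decomposition $\derived^\bounded(X)=\langle\mathcal{A},\mathcal{B}\rangle$ and a closed point $x\in X$, one examines the decomposition triangle of $\mathcal{O}_x$ and shows, using Serre duality together with a section of some $\omega_X\otimes L$ with $L\in\Pic^0(X)$ not vanishing at $x$, that $\mathcal{O}_x$ must lie entirely in $\mathcal{A}$ or entirely in $\mathcal{B}$. When $[\omega_X]\in\Pic^0(X)$ the paracanonical system $\{\omega_X\otimes L\}_{L\in\Pic^0(X)}$ contains $\mathcal{O}_X$, so its base locus is empty and every point qualifies; connectedness of $X$ then forces all skyscrapers into a single component, and the other component, being orthogonal to all skyscrapers, vanishes. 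This is the mechanism the paper itself alludes to in \cref{remark:paracanonical-base-locus}, and it is the one you should learn or reproduce if you want a self-contained proof.
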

The proof of indecomposability for bielliptic surfaces \cite[Proposition~4.1]{1508.00682v2}
subsequently uses that a bielliptic surface~$S$
has an elliptic fibration without multiple fibers as Albanese morphism
and that~$\omega_S\cong\alb^*\mathcal{L}$ for a torsion line bundle on~$\Alb(S)$
(see also, e.g.,~\cite[\S4.D]{MR0565468})
to conclude that~$\omega_S\in\Pic^0S$.
In the following proposition we generalize this to higher-dimensional hyperelliptic varieties
whose Albanese morphism has Calabi--Yau fibers.

\begin{proposition}
  \label{proposition:canonical-bundle-formula}
  Let~$X$ be a hyperelliptic variety whose Albanese fibers are Calabi--Yau varieties,
  i.e., their canonical bundle is trivial.
  Then
  \begin{equation}
    \label{equation:cy-fibers}
    \omega_X\cong\alb_{X}^*\mathcal{L}
  \end{equation}
  with~$\mathcal{L}$ a torsion line bundle on $\Alb{X}$.
  In particular, $[\mathcal{L}] \in \operatorname{Pic}^{0}(\Alb{X})$ and thus $[\omega_{X}] \in \operatorname{Pic}^{0}(X)$.
\end{proposition}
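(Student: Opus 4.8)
The plan is to establish a relative canonical bundle formula for the Albanese morphism and then identify the relative dualizing sheaf as a pullback from the base. First I would record that, by \cref{lemma:fiber-bundle-structure}, the morphism $\alb_X$ is an étale fiber bundle with smooth fibers; being étale-locally on the base a projection $F \times B \to B$, it is in particular a smooth (hence flat and proper) morphism. For such a morphism between smooth varieties the relative canonical bundle $\omega_{X/\Alb(X)}$ is a line bundle and
\begin{equation}
  \omega_X \cong \alb_X^* \omega_{\Alb(X)} \otimes \omega_{X/\Alb(X)}.
\end{equation}
Since $\Alb(X)$ is an abelian variety we have $\omega_{\Alb(X)} \cong \mathcal{O}_{\Alb(X)}$, so this reduces the problem to showing that $\omega_{X/\Alb(X)}$ is the pullback of a torsion line bundle.

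The second step is to show that $\omega_{X/\Alb(X)}$ is trivial along the fibers. Because $\alb_X$ is smooth, its restriction to a fiber $X_y$ satisfies $\omega_{X/\Alb(X)}|_{X_y} \cong \omega_{X_y}$ by adjunction, and this is $\mathcal{O}_{X_y}$ by the Calabi--Yau hypothesis. Thus $\hh^0(X_y, \omega_{X/\Alb(X)}|_{X_y}) = 1$ is constant, so by cohomology and base change the pushforward $\mathcal{L} \colonequals (\alb_X)_* \omega_{X/\Alb(X)}$ is a line bundle on $\Alb(X)$ and the natural adjunction map $\alb_X^* \mathcal{L} \to \omega_{X/\Alb(X)}$ restricts to an isomorphism on every fiber, hence is an isomorphism. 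Alternatively, one can pull back along the étale trivialization $B \to \Alb(X)$ of \cref{lemma:fiber-bundle-structure}, where $\omega_{X/\Alb(X)}$ becomes the manifestly trivial relative canonical bundle of $F \times B \to B$, and descend. Combining with the first step gives $\omega_X \cong \alb_X^* \mathcal{L}$.

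It remains to see that $\mathcal{L}$ is torsion and lies in $\Pic^0$. Since $X$ is hyperelliptic, $\omega_X$ is torsion, say $\omega_X^{\otimes m} \cong \mathcal{O}_X$. Because $\alb_X$ has connected fibers and is surjective, we have $(\alb_X)_* \mathcal{O}_X \cong \mathcal{O}_{\Alb(X)}$, so the projection formula yields
\begin{equation}
  \mathcal{L}^{\otimes m} \cong (\alb_X)_* \alb_X^* \mathcal{L}^{\otimes m} \cong (\alb_X)_* \omega_X^{\otimes m} \cong (\alb_X)_* \mathcal{O}_X \cong \mathcal{O}_{\Alb(X)},
\end{equation}
hence $\mathcal{L}$ is torsion. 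On the abelian variety $\Alb(X)$ the Néron--Severi group is torsion-free, so every torsion line bundle lies in $\Pic^0(\Alb(X))$, giving $[\mathcal{L}] \in \Pic^0(\Alb(X))$. Finally $\alb_X^*$ sends $\Pic^0(\Alb(X))$ into $\Pic^0(X)$, so $[\omega_X] = \alb_X^*[\mathcal{L}] \in \Pic^0(X)$.

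The hard part will be the descent in the second step: one must ensure the relative dualizing sheaf genuinely descends to a line bundle on $\Alb(X)$ rather than merely becoming trivial after the étale base change. This is handled cleanly by the constancy of $\hh^0$ on fibers together with cohomology and base change, using crucially that $\alb_X$ is smooth with geometrically connected fibers, so that $(\alb_X)_* \mathcal{O}_X \cong \mathcal{O}_{\Alb(X)}$ and the pushforward of a fiberwise-trivial line bundle is a line bundle on the base.
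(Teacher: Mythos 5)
Your proof is correct, but it takes a genuinely different route from the paper. The paper invokes the canonical bundle formula of Fujino--Mori: it writes $\mathrm{K}_X = f^*(D)+B$ with $B=B_+-B_-$, kills $B_-$ by an equidimensionality argument, shows $D$ (hence $B$) has integer coefficients because the minimal positive plurigenus of the Calabi--Yau fiber is $b=1$, and then argues that an effective torsion divisor is linearly trivial, so $\mathrm{K}_X\sim f^*(D)$ with $D$ torsion. You instead exploit directly that $\alb_X$ is a smooth proper morphism (via \cref{lemma:fiber-bundle-structure}), write $\omega_X\cong\alb_X^*\omega_{\Alb(X)}\otimes\omega_{X/\Alb(X)}$ with $\omega_{\Alb(X)}\cong\mathcal{O}$, observe that the Calabi--Yau hypothesis makes $\omega_{X/\Alb(X)}$ fiberwise trivial with constant $\hh^0=1$, and conclude by Grauert/cohomology-and-base-change that the adjunction map $\alb_X^*(\alb_X)_*\omega_{X/\Alb(X)}\to\omega_{X/\Alb(X)}$ is a fiberwise surjection of line bundles, hence an isomorphism; torsion of $\mathcal{L}$ then follows from the projection formula and $(\alb_X)_*\mathcal{O}_X\cong\mathcal{O}_{\Alb(X)}$, and the $\Pic^0$ statement from torsion-freeness of the N\'eron--Severi group of an abelian variety exactly as in the paper. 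Your argument is more elementary and self-contained, avoiding the canonical bundle formula machinery entirely, at the cost of using the isotriviality/smoothness of $\alb_X$ in an essential way (the paper only needs equidimensionality and smoothness of $f$ at one point of its argument); the paper's approach, by contrast, is the one that would survive in settings where the Albanese map is merely a fibration rather than a fiber bundle. Both proofs use the Calabi--Yau hypothesis in the same essential place: you need it to make $\omega_{X/\Alb(X)}$ globally generated along the fibers, the paper needs it to force the plurigenus constant $b$ to equal $1$ so that $D$ is an integral divisor.
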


\begin{proof}
  Let us write $C \colonequals \Alb(X)$
  (previously denoted~$B$, but the letter~$B$ will play a standard role
  in the canonical bundle formula we will use shortly)
  and $f \colonequals \operatorname{alb}_{X}$ to shorten the notation.
  By assumption, the general fiber $F$ of $f$ has $\omega_{F} \cong \mathcal{O}_{F}$.
  Therefore, $\hh^{0}(\omega_{F}) = 1$,
  so~$1$ is the smallest~$b \in \mathbb{Z}_{>0}$
  such that the~$b$th plurigenus~$P_{b}(F)$ is non-zero.

  Moreover, since $C$ is an abelian variety, we also have $\omega_{C} \cong \mathcal{O}_{C}$.
  Therefore, $\mathrm{K}_{X/C} = \mathrm{K}_{X}$.

  The canonical bundle formula \cite[Proposition 2.2]{MR1863025}
  implies that there exist a $\mathbb{Q}$-divisor~$D$ on~$C$
  and a $\mathbb{Q}$-divisor~$B$ on~$X$ with the following properties:
  \begin{enumerate}
    \item There exists a graded $\mathcal{O}_{C}$-algebra isomorphism
      \begin{equation}
        \label{equation:canonical-bundle-formula-algebra}
        \bigoplus_{i \geq 0} \mathcal{O}_C(\lfloor iD \rfloor)
        \cong
        \bigoplus_{i \geq 0} (f_{*}\mathcal{O}_X(i\mathrm{K}_{X}))^{\vee\vee},
      \end{equation}
      where $(-)^{\vee\vee}$ denotes the reflexive hull.
    \item The isomorphism \eqref{equation:canonical-bundle-formula-algebra} induces an equality
      \begin{equation}
        \label{equation:cbf2}
        \mathrm{K}_{X} = f^{*}(D) + B,
      \end{equation}
      where $B$ is a $\mathbb{Q}$-divisor on $X$,
      which decomposes as~$B = B_{+} - B_{-}$,
      where~$B_{+}$ and $B_{-}$ are effective~$\mathbb{Q}$-divisors
      without common irreducible components,
      such that $f_{*}\mathcal{O}_{X}(\lfloor iB_{+} \rfloor) = \mathcal{O}_{C}$
      for all $i > 0$ and $\operatorname{codim}(f(\operatorname{Supp}{B_{-}})) \geq 2$.
  \end{enumerate}
  Since $f$ is equidimensional by \cref{lemma:fiber-bundle-structure},
  we have $B_{-} = 0$ and $B$ is itself an effective $\mathbb{Q}$-divisor.

  Indeed, suppose that~$E$ is a prime Weil divisor on $X$
  such that~$\operatorname{codim}(f(E)) \geq 2$.
  Let $m \colonequals \dim(X)$, $n \colonequals \dim(C)$ and $d \colonequals m - n$.
  By equidimensionality,~$d$ is the dimension of any fiber of $f$.
  The assumption on $E$ implies that $Z \colonequals f(E)$ has dimension at most $n-2$,
  and since $E \subseteq f^{-1}(Z)$, $W \colonequals f^{-1}(Z)$ has dimension at least $m - 1$.
  Consider now the morphism $f|_{W} \colon W \to Z$
  and let $F$ be a general fiber of this morphism,
  so that $\dim(F) = \dim(W) - \dim(Z) \geq m - n + 1 > d$.
  Then $F$ is also a fiber of $f$, so its dimension should be $d$, a contradiction.
  Therefore $B_{-} = 0$ and $B = B_{+}$ is an effective $\mathbb{Q}$-divisor.

  Moreover, it follows from the proof of \cite[Theorem 1.12]{MR0927961}
  and the smoothness of $f$
  that $D$ is a divisor with integer coefficients,
  because the constant $c$ in the proof of \cite[Proposition 2.2]{MR1863025} can be taken to be~$1$.
  Therefore, $B$ is also a divisor with integer coefficients.

  Let $i > 0$ be such that $i\mathrm{K}_{X} \sim 0$.
  Then from \eqref{equation:canonical-bundle-formula-algebra}
  and the isomorphism~$\mathcal{O}_{C} \cong f_{*}\mathcal{O}_{X}$
  we deduce that~$\mathcal{O}_{C}(iD) \cong \mathcal{O}_{C}$, i.e.,~$iD \sim 0$.
  It follows now from \eqref{equation:canonical-bundle-formula-algebra}
  that $iB = i\mathrm{K}_{X} - f^{*}(iD) \sim 0$ as well.
  So both $D$ and $B$ are torsion divisors.

  In particular, $B$ is an effective divisor which is torsion.
  If $B \neq 0$, then it can be written as the zero locus of a global section of the torsion line bundle $\mathcal{O}(B)$.
  The existence of such a section would imply that~$\mathcal{O}_X(B) \cong \mathcal{O}_X$,
  so we would deduce that $B \sim 0$.
  So in any case, we deduce that
  \begin{equation}
    \label{equation:KX-is-pullback}
    \mathrm{K}_{X} \sim f^{*}(D)
  \end{equation}
  for some torsion divisor $D$ on $C$.
  Since $C$ is an abelian variety,
  $\HH^{2}(C,\mathbb{Z})$ is torsion-free by \cite[Corollary~1.3.3]{MR2062673},
  so numerically trivial divisors are algebraically trivial.
  Therefore, $[\mathcal{O}_{C}(D)] \in \operatorname{Pic}^{0}(C)$.
  Finally, pullback preserves the identity component $\operatorname{Pic}^{0}$,
  and thus~$[\omega_{X}] \in \operatorname{Pic}^{0}(X)$ as well.
\end{proof}

This gives us the following corollary.
\begin{corollary}
  \label{corollary:cy-fibers}
  Let $X$ be a hyperelliptic variety whose Albanese fibers are Calabi--Yau varieties.
  Then $\derived^\bounded(X)$ is indecomposable.
\end{corollary}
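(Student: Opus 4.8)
The plan is to deduce this directly by splicing together the two ingredients already assembled. First I would invoke \cref{proposition:canonical-bundle-formula}: under the standing hypothesis that the Albanese fibers of~$X$ are Calabi--Yau, that proposition already establishes~$[\omega_X]\in\operatorname{Pic}^0(X)$. With this in hand, the conclusion is immediate from the Kawatani--Okawa criterion \cref{theorem:kawatani-okawa}, which guarantees that~$\derived^\bounded(X)$ is indecomposable for any smooth projective variety whose canonical class lies in~$\operatorname{Pic}^0$. The only hypothesis of \cref{theorem:kawatani-okawa} left to check is that~$X$ is smooth and projective, but this is automatic since a hyperelliptic variety is smooth projective by definition.

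So the corollary is essentially a one-line consequence, and there is no genuine obstacle at this stage: all the substantive work has been carried out upstream in the proof of \cref{proposition:canonical-bundle-formula} via the canonical bundle formula. In particular, the subtle points there—showing that the discriminant divisor~$B$ is effective (using the equidimensionality of~$\alb_X$ from \cref{lemma:fiber-bundle-structure} to rule out the negative part~$B_-$), that~$D$ and~$B$ have integer coefficients, and that both are torsion, so that~$\mathrm{K}_X\sim f^*D$ with~$D$ a torsion divisor on the abelian variety~$\Alb(X)$—were the genuine content, together with the passage from torsion (hence numerically trivial) to algebraically trivial using the torsion-freeness of~$\HH^2(\Alb(X),\mathbb{Z})$.

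If one wanted a more self-contained narrative, an alternative route would be to feed~$[\omega_X]\in\operatorname{Pic}^0(X)$ into whichever indecomposability engine one prefers; but since \cref{theorem:kawatani-okawa} is stated in exactly the form we need, the cleanest proof simply chains \cref{proposition:canonical-bundle-formula} into it. I therefore expect the write-up to be two sentences long, with the understanding that the mathematical weight lives entirely in the preceding proposition.
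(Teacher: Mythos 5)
Your proposal is correct and matches the paper's proof exactly: it cites \cref{proposition:canonical-bundle-formula} to get $[\omega_X]\in\operatorname{Pic}^0(X)$ and then concludes by \cref{theorem:kawatani-okawa}. Nothing further is needed.
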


\begin{proof}
  This follows from \Cref{proposition:canonical-bundle-formula} and \Cref{theorem:kawatani-okawa}.
\end{proof}

\begin{remark}
  \label{remark:necessary-condition}
  The assumption that the fiber $F$ is a Calabi--Yau variety in \Cref{proposition:canonical-bundle-formula}
  is a necessary condition for the conclusion that the canonical bundle of $X$
  is the pull-back of a line bundle on the Albanese.
  Indeed, recall from \Cref{remark:torsion-order-of-fiber} that $\mathrm{K}_{X}|_{F} = \mathrm{K}_{F}$,
  so if $\omega_{X}$ is the pull-back of a line bundle on the Albanese,
  then $\omega_{F}$ is the pull-back of a line bundle on a point.
  \Cref{proposition:canonical-bundle-formula} shows that this assumption
  is also a sufficient condition in this setting.
\end{remark}

\begin{remark}
  \label{remark:paracanonical-base-locus}
  If the Albanese fibers of $X$ are Calabi--Yau, then \Cref{proposition:canonical-bundle-formula} shows
  that the paracanonical base locus is empty.
  Conversely, if the Albanese fibers of $X$ are not Calabi--Yau,
  then the paracanonical base locus is all of $X$.
  Indeed, in this case the Albanese fibers are hyperelliptic varieties
  whose canonical bundle is torsion but not trivial,
  so $\HH^{0}(X_{b},\omega_{X}|_{X_{b}}) = \HH^{0}(X_{b},\omega_{X_{b}}) = 0$
  for all closed points $b \in \Alb(X)$.
  By Grauert's theorem \cite[Corollary III.12.9]{MR0463157},
  $(\alb_{X})_{*}\omega_{X} = 0$, so the relative base locus with respect to~$\alb_{X}$
  is the whole $X$.
  By \cite[Theorem 1.5]{MR4805086}, the paracanonical base locus of $X$ is also all of $X$.
  In particular, Lin's theorem \cite[Corollary 1.6]{2107.09564v3} applies
  to a hyperelliptic variety $X$ if and only if \Cref{proposition:canonical-bundle-formula} does,
  and thus this approach to proving semiorthogonal indecomposability
  does not give any new information.
\end{remark}

\begin{remark}
  \label{remark:cy-fibers}
  In the case of a cyclic hyperelliptic variety $X$ satisfying the assumption in \Cref{proposition:cyclic-implies-abelian-fibers},
  \Cref{proposition:canonical-bundle-formula} gives an alternative argument
  to show the indecomposability of $\derived^\bounded(X)$.
  From \Cref{proposition:canonical-bundle-formula} we deduce moreover that $[\omega_{X}] \in \operatorname{Pic}^{0}(X)$,
  but on the downside, using this argument we cannot deduce stable indecomposability.
\end{remark}

\begin{remark}
  \label{remark:conditions-diverge}
  As explained in the introduction, all three conditions in \Cref{proposition:three-results} apply to bielliptic surfaces.
  In higher dimensions, having irregularity~$q_{X} = \dim X - 1$ implies being cyclic by \cref{proposition:large-irregularity-is-cyclic}.
  However, in higher dimensions, one can find:
  \begin{itemize}
    \item non-cyclic hyperelliptic varieties of irregularity~$q_X=\dim X-2$
      whose Albanese fiber has non-trivial canonical bundle (see \cref{example:hyperelliptic-fibers});
    \item cyclic hyperelliptic varieties
      whose Albanese fiber has non-trivial canonical bundle (see \cref{example:bielliptic-fibers});
    \item hyperelliptic varieties whose Albanese fiber has trivial canonical bundle,
      yet which are not cyclic and have~$q_X\leq\dim X-3$ (see \cref{example:D4-threefold,example:Z2xZ2-threefold});
    \item cyclic hyperelliptic varieties of irregularity~$q_X\leq\dim X-3$
      (see \cref{remark:low-irregularity-cyclic}).
  \end{itemize}
\end{remark}

\renewcommand*{\bibfont}{\small}
\printbibliography

\emph{Pieter Belmans}, \url{pieter.belmans@uni.lu} \\
Department of Mathematics, Universit\'e de Luxembourg, 6, avenue de la Fonte, L-4364 Esch-sur-Alzette, Luxembourg

\emph{Andreas Demleitner}, \url{andreas.demleitner@math.uni-freiburg.de} \\
Mathematical Institute, University of Freiburg, Ernst-Zermelo-Str. 1, 79104 Freiburg im Breisgau, Germany

\emph{Pedro N\'u\~nez}, \url{pnunez@ntu.edu.tw} \\
Department of Mathematics, National Taiwan University, No. 1, Sec. 4, Roosevelt Rd., Taipei 10617, Taiwan

\end{document}